\theoremstyle{plain}
\newtheorem{theorem}{Theorem}[section]
\newtheorem{lemma}[theorem]{Lemma}
\newtheorem{corollary}[theorem]{Corollary}
\theoremstyle{definition}
\newtheorem{definition}[theorem]{Definition}
\newtheorem{remark}[theorem]{Remark}
\newtheorem*{notation}{Notation}
\newcommand{\al}{\alpha}
\newcommand{\eps}{\varepsilon}
\newcommand{\ga}{\gamma}
\newcommand{\mA}{\mathcal{A}}
\newcommand{\mB}{\mathcal{B}}
\newcommand{\N}{\mathbb{N}}
\newcommand{\R}{\mathbb{R}}
\newcommand{\Sn}{\mathfrak{S}}
\newcommand{\pd}{\textup{pd}}
\newcommand{\s}{\text{span}}
\newcommand{\proj}{\text{proj}}
\newcommand{\sse}{\subseteq}
\newcommand{\sli}{\sum\limits}
\newcommand{\emp}{\emptyset}
\newcommand{\ra}{\rightarrow}
\newcommand{\semicolon}{;}
\newcommand{\npm}{\pm [n]}
\DeclareMathOperator{\absmin}{absmin}
\title[]{Interpreting the (signed) chromatic polynomial coefficients via hyperplane arrangements}
\author{Neha Goregaokar}
\date{\today}
\address{Department of Mathematics, Brandeis University, Waltham, MA 02453, USA}
\email{ngoregaokar@brandeis.edu}
\begin{document}

\begin{abstract}
A recent result of Lofano and Paolini expresses the characteristic polynomial of a real hyperplane arrangement in terms of a projection statistic on the regions of the arrangement. We use this result to give an alternative proof for Greene and Zaslavsky's interpretation for the coefficients of the chromatic polynomial of a graph and further generalize this interpretation to signed graphs. We also show that this projection statistic has a nice combinatorial interpretation in the case of the braid arrangement, which generalizes to graphical arrangements of natural unit interval graphs. 
\end{abstract}

\maketitle

\section{Introduction}
A (real) hyperplane arrangement is a finite collection of affine hyperplanes in $\R^n$ for $n \geq 1$. These hyperplanes partition the space into convex regions. A central question of the study of hyperplane arrangements is counting the number of regions of a given hyperplane arrangement. One of the major results regarding this problem is from Zaslavsky's thesis~\cite{ZThesis}, where he proved that the number of regions of a hyperplane arrangement is (upto sign) the evaluation of the characteristic polynomial of that arrangement at $-1$. 

Now, given a hyperplane arrangement, one can associate to it a polynomial known as the characteristic polynomial. It is known that the coefficients of this polynomial are integral and alternate in sign. Hence, an evaluation of the characteristic polynomial at $-1$ is the sum of the absolute values of its coefficients. This raises the question of if one can express the characteristic polynomial of an arrangement $\mA \sse \R^n$ as a statistic on the set of regions of $\mA$. 

No such statistic was known until recently, when Lofano and Paolini~\cite{LP} and Kabluchko~\cite{Kabluchko} independently gave the same projection statistic. They showed that given a generic point $v \in \R^n$, the characteristic polynomial of $\mA$ can be expressed as the generating function of the regions counted by the dimension of the face containing the projection of $v$ to the region (see Section~\ref{bgsec} for the exact statement). 

For many hyperplane arrangements, there is a labeling of the regions by combinatorial objects. Hence the question arises: can we find a point $v\in \R^n$ such that the projection statistic given in \cite{ Kabluchko, LP} corresponds to a natural combinatorial statistic on the objects labeling the regions? In this paper, we answer this question for graphical arrangements and more generally, all subarrangements of the Type $B$ Coxeter arrangement. 

Given a graph $G = ([n],E)$ one can define the corresponding \textit{graphical arrangement} $\mA_G$ as the collection of the hyperplanes $x_i - x_j = 0$ where $\{i, j\} \in E$. It is known that for any graph $G$, 
\begin{equation}\label{CC}
    \chi_G(t) = \chi_{A_G}(t)
\end{equation}
that is, the chromatic polynomial of a graph is equal to the characteristic polynomial of the corresponding graphical arrangement. Hence, any interpretation of the coefficients of the chromatic polynomial is an interpretation of the coefficients of the characteristic polynomial. 
It is also known that each region of the graphical arrangement can be uniquely labeled by an acyclic orientation of $G$. This means that the chromatic polynomial can be expressed through~\cite{Kabluchko,LP} as a generating function on acyclic orientations of $G$ according to a projection statistic (which depends on choice of projection point $v$).   

\begin{figure}[ht]
    \centering
    \includegraphics[width=0.7\linewidth]{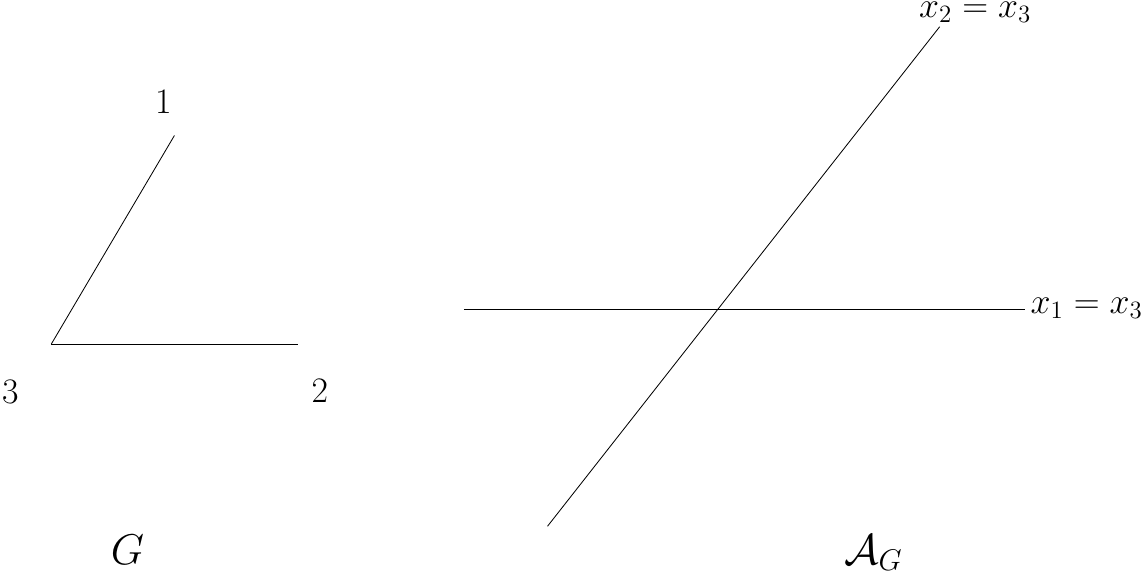}
    \caption{A graph $G$ and the corresponding graphical arrangement $\mA_G$.}
    \label{fig:IntroGraph}
\end{figure}
In~\cite{GZ}, Greene and Zaslavsky gave an interpretation for the coefficients of the chromatic polynomial of a graph $G$ as counting acyclic orientations of $G$ according to their number of source components (see Section~\ref{bgsec}). This raises the question of whether the Greene and Zaslavsky result can be related to the projection statistic, and whether there exists a point $v \in \R^n$ such that for each region of $\mA_G$, the projection statistic given in ~\cite{Kabluchko, LP} equals the number of source components of the acyclic orientation of $G$ labeling the region as given in \cite{GZ}. In this paper, we answer this question in the affirmative. 

We further generalize this by considering  subarrangements of the Type $B$ Coxeter arrangement. We call these arrangements $B$-graphical arrangements as they can be defined on symmetric graphs in a similar manner to graphical arrangements. In fact, the regions of a $B$-graphical arrangement $\mB_G$ are in bijection with symmetric acyclic orientations of the symmetric graph $G$. We generalize the concept of source components to these symmetric graphs and show that there are points $v \in \R^n$ such that for each region of $\mB_G$, the projection statistic given in~\cite{Kabluchko, LP} equals the number of signed source components of the symmetric acyclic orientation of $G$ labeling the region. Further, the characteristic polynomial of a $B$-graphical arrangement is equal to the chromatic polynomial of the associated symmetric graph, and hence we can generalize Greene and Zaslavky's result to symmetric graphs. In fact, symmetric graphs are equivalent to signed graphs, so we obtain a combinatorial interpretation for the coefficients of the chromatic polynomial of a signed graph. 

This paper is organized as follows. In Section~\ref{bgsec}, we set up some background and recall some results. In Section~\ref{braidsec}, we look at the braid arrangement $\mA_n = \mA_{K_n}$, and show that there is a particularly nice combinatorial interpretation for the projection statistic (see Theorem~\ref{braidRLmin}). In Section~\ref{gasec}, we find a set of points in $\R^n$ such that for each region of $\mA_G$, the projection statistic equals the number of source components of the acyclic orientation of $G$ labeling the region. This shows that the Greene and Zaslavsky result can be obtained using the Lofano and Paolini statistic. In Section~\ref{nuisec}, we look at graphical arrangements of natural unit interval graphs and show that the combinatorial statistic for the braid arrangement generalizes to these arrangements (see Theorem~\ref{NUIProjRL}). We use this interpretation to give an alternative proof of the form of the chromatic polynomial of a natural unit interval graph (see Corollary~\ref{NUIChar}). Finally, in Section~\ref{TypeBsec}, we generalize our results from Section~\ref{gasec} to subarrangements of the Type $B$ Coxeter arrangement. 

\section{Background}\label{bgsec}
In this section we state some definitions and recall some known results. 
We assume a standard background on graphs and hyperplane arrangements as given in \cite{Background}. We use \textit{graph} to mean an undirected finite graph without loops or multiple edges. For the rest of this section, let $G = ([n], E)$ be a graph, where $[n] = \{1, \ldots, n\}$. 

\begin{definition}\label{ChromaticPolynomial}
    Let $G = ([n], E)$ be a graph. The \emph{chromatic polynomial} of $G$, denoted by $\chi_G$, is the polynomial which when evaluated at a non-negative integer $q$ gives the number of proper $q$-colorings of $G$. 
\end{definition}

We now define \textit{source components} as in \cite{BN}. 
\begin{definition}\label{SourceComponents}
     Let $\ga$ be an acyclic orientation of $G$. For $i\in [n]$, let $R_i$ be the set of vertices reachable from $i$ by a directed path of $\ga$ (with $i \in R_i$). We define $S_1, S_2, \ldots$ recursively: for $k \geq 1$, if $\bigcup_{i < k} S_i = [n]$, then $S_k = \emp$. Otherwise, define $S_k = R_m \setminus \bigcup_{i < k} S_i$ where $m = \min\{[n] \setminus \bigcup_{i < k} S_i\}$.  The non-empty subsets $S_k$ thus defined are the \emph{source components} of $\gamma$.
\end{definition}

\begin{figure}[ht]
    \centering
    \includegraphics[width=0.75\linewidth]{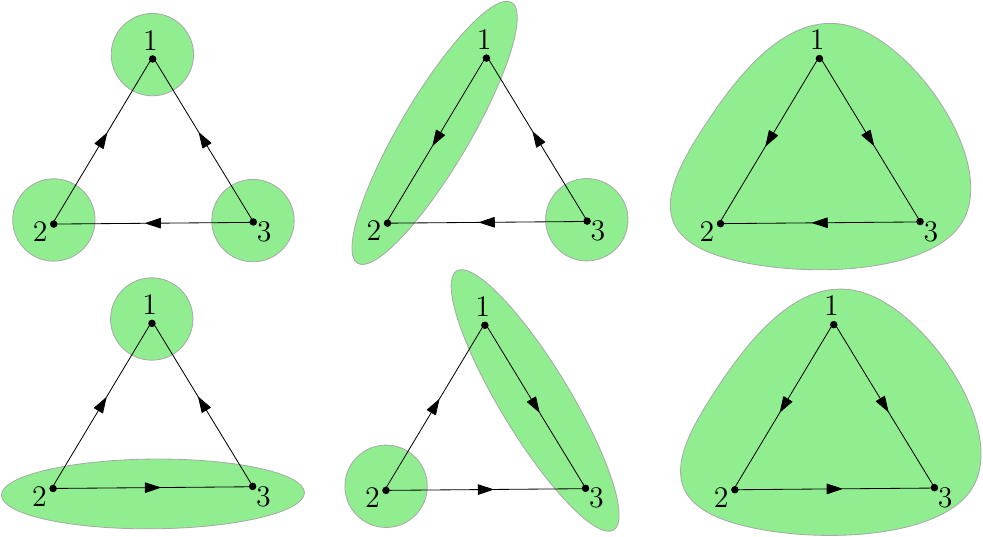}
    \caption{The source components of the 6 acyclic orientations of $K_3$.}
    \label{Fig:SourceComponents}
\end{figure}

The following result was proven by Greene and Zaslavsky. 
\begin{theorem}\cite{GZ} \label{GZProj}
    Let $G = ([n], E)$ be a graph, and $k$ be a non-negative integer. Then, 
    $$[q^k]\chi_G(q) = (-1)^{n-k}\alpha_k$$ where $\alpha_k$ is the number of acyclic orientations of $G$ with exactly $k$ source components. 
\end{theorem}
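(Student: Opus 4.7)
I plan to deduce Theorem~\ref{GZProj} from the Lofano--Paolini projection formula recalled in Section~\ref{bgsec} together with Equation~\eqref{CC}. By \eqref{CC} it suffices to interpret the coefficients of $\chi_{\mA_G}(q)$, and for any generic $v \in \R^n$ the Lofano--Paolini formula writes $\chi_{\mA_G}(q)$ as a signed generating function over regions $R$ of $\mA_G$ whose coefficient of $q^k$ (up to the sign $(-1)^{n-k}$) counts the regions with $\proj_R(v)$ lying on a face of dimension $k$. Since regions of $\mA_G$ are labeled by acyclic orientations $\gamma$, it is enough to produce a generic $v$ such that for every $\gamma$ the face containing $\proj_{R_\gamma}(v)$ has dimension equal to the number of source components of $\gamma$.

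The choice of $v$ is the crux: I would take $v_i = M^{n-i}$ with $M$ large relative to $n$, so that $v_1 > v_2 > \cdots > v_n$ with rapidly growing gaps. Computing $y = \proj_{R_\gamma}(v)$ is then the isotonic regression of minimizing $\|y - v\|^2$ subject to $y_i \le y_j$ for every directed edge $i \to j$ of $\gamma$. The projection is constant on each block of a partition $\Pi(\gamma)$ of $[n]$, and because every source component $S_k$ of $\gamma$ induces a connected subgraph of $G$ (any directed path from $m_k$ in $\gamma$ to a vertex $a \in S_k$ must stay inside $S_k$, since otherwise $a$ would lie in an earlier $R_{m_j}$ and hence in an earlier $S_j$), the face of $\mA_G$ containing $y$ has dimension exactly $|\Pi(\gamma)|$. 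The central claim I would prove is that $\Pi(\gamma) = \{S_1, S_2, \dots\}$, the source component partition of Definition~\ref{SourceComponents}.

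I would prove this central claim by induction on $k$, matching the recursive definition of the $S_k$. For $k = 1$: the coordinate $v_1$ is the largest, and every $j \in R_1$ is constrained by a directed path from $1$ in $\gamma$ to satisfy $y_j \ge y_1$; the large $v_1$ forces all these inequalities to be tight at the optimum, pooling $R_1$ into a single block at the $v$-average. Conversely, every edge between $S_1 = R_1$ and its complement must have the form $j \to i$ with $j \notin S_1$ and $i \in S_1$ (otherwise $j$ would lie in $R_1$), and the rapid decay of $v$ makes $v_j$ much smaller than the $v$-average over $S_1$ (which is dominated by $v_1$), so these constraints are strictly satisfied and no cross-pooling occurs. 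Restricting to the induced subgraph on $[n] \setminus S_1$ and applying the same argument with $m_2$ in place of $1$ produces $S_2$, and so on. I expect the main difficulty to be a careful verification of the KKT conditions for the proposed block-averaged solution; the required nonnegativity of the dual multipliers should follow from the rapid separation of the $v_i$, but the bookkeeping --- especially around edges with one endpoint in a previously-assigned source component --- will require care.
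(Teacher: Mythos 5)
Your reduction coincides with the paper's: Equation~\eqref{CC} and Theorem~\ref{Kab} reduce everything to showing that, for a rapidly decaying $v$ (your $v_i = M^{n-i}$ satisfies Equation~\eqref{vec2} once $M > 6n^2+1$), the projection of $v$ onto $R_\gamma$ lies in the relative interior of the face whose blocks are the source components of $\gamma$ --- which is exactly the content of Theorem~\ref{graphSC}. Your supporting structural observations are correct and are the same ones the paper relies on: each $S_k$ is reachable from its minimum $m_k$ by directed paths staying inside $S_k$ (so the blocks induce connected subgraphs and span a flat of $\mA_G$), and every edge between distinct source components is directed from the later component into the earlier one, so the cross-block constraints are strictly slack at the block-averaged candidate point. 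Where you genuinely diverge is in certifying optimality: you view $\proj_v(R_\gamma)$ as an isotonic regression and propose to verify the KKT conditions for the candidate directly, whereas the paper proceeds by elimination among ``good faces'' (Lemmas~\ref{GFCond}, \ref{ProjOnGoodFace} and \ref{ubdim}) and then compares distances to the surviving candidates by explicit estimates (Lemma~\ref{lbdim}). Your route is viable and arguably more transparent, but note that you have deferred precisely the step carrying all the technical weight: showing that no source component splits into smaller level sets. By LP duality, the existence of nonnegative multipliers on the active within-block constraints is equivalent to the cut conditions that every up-set $U \subsetneq S_k$ (in the reachability order restricted to $S_k$) has $v$-average at most that of $S_k$, and every nonempty down-set has $v$-average at least that of $S_k$; both follow because every nonempty down-set contains $m_k$ while every proper up-set omits it, so the dominant coordinate $v_{m_k}$ decides the comparison when $M$ is large. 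This verification is the analogue of the $\eps$-estimates in Lemma~\ref{lbdim}, and until it is written out your argument is a plan rather than a proof; with it supplied, the approach is correct.
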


The characteristic polynomial of a hyperplane arrangement $\mA$, denoted by $\chi_{\mA}$, is classically defined using the Mobius function of the intersection lattice $\mathcal{L}_{\mA}$ as follows: $\chi_{\mA}(t) := \sli_{L \in \mathcal{L}_{\mA}} \mu(0, L)t^{\dim L}$. For our part, we use a characterization of the characteristic polynomial given by Lofano and Paolini~\cite{LP} and Kabluchko~\cite{Kabluchko}, which we now recall. 

\begin{definition}
    Let $\mA$ be a hyperplane arrangement in $\R^n$ and let $v \in \R^n$ be an arbitrary point. Let $R$ be a region of $\mA$. We define the \emph{projection from $v$ to $R$} to be the unique point in $R$ that has the minimum Euclidean distance from $v$, and denote it by \emph{$\textup{proj}_v(R)$}. Further, we define the \emph{projection dimension of $v$ on $R$} to be the dimension of the unique face of $R$ that contains $\proj_v(R)$ in its relative interior, and denote it by $\pd_v(R)$.
\end{definition}

\begin{figure}[ht]
    \centering
    \includegraphics[width=0.6\linewidth]{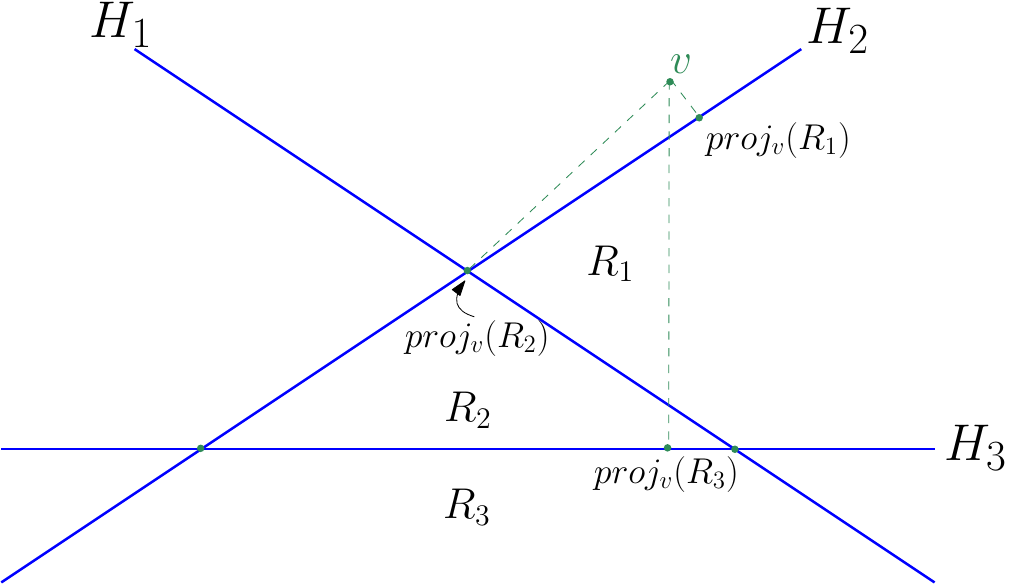}
    \caption{A hyperplane arrangement in $\R^2$ with a point $v \in \R^2$ and the projection from $v$ to some regions. Here, $\pd_v(R_1) = 1$, $\pd_v(R_2) = 0$, and $\pd_v(R_3) = 1$. }
    \label{fig:ProjStat}
\end{figure}

\begin{theorem}\cite{Kabluchko, LP}\label{Kab}
    Let $\mA$ be a hyperplane arrangement in $\R^n$, and let $v \in \R^n$ be a generic\footnote{For a detailed definition of generic, see~\cite{Kabluchko}. For our purposes, as long as the set of points we consider contains an open ball, it contains generic points.} point. Let $R(\mA)$ be the set of regions of $\mA$. 
    Then, the characteristic polynomial of $\mA$ is given by 
    $$\chi_{\mA}(t) = \sum\limits_{R \in R(\mA)} (-1)^{n - \pd_v(R)}t^{\pd_v(R)}.$$

    Equivalently, for a non-negative integer $k$, the coefficient of $t^k$ in $(-1)^n\chi_{\mA}(-t)$ is the number of regions $R$ of $\mA$ such that the projection dimension of $v$ on $R$ is $k$.
\end{theorem}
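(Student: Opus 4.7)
The plan is to prove Theorem~\ref{Kab} by induction on $|\mA|$ using the classical deletion-restriction recurrence
\[
\chi_{\mA}(t) = \chi_{\mA \setminus H_0}(t) - \chi_{\mA^{H_0}}(t),
\]
where $H_0 \in \mA$ is any chosen hyperplane and $\mA^{H_0} = \{H \cap H_0 : H \in \mA \setminus \{H_0\}\}$ is viewed as an arrangement in $H_0 \cong \R^{n-1}$. The base case $\mA = \emp$ is immediate: the unique region is all of $\R^n$, the projection $\proj_v(R) = v$ sits in its interior, so $\pd_v(R) = n$ and the proposed formula yields $t^n = \chi_{\emp}(t)$.

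For the inductive step, fix $H_0 \in \mA$, set $\mA' = \mA \setminus H_0$, and let $p_0 = \proj_{H_0}(v)$ denote the orthogonal projection of $v$ onto $H_0$. Each region $R' \in R(\mA')$ is either already a region of $\mA$, or is split by $H_0$ into two regions $R_1, R_2 \in R(\mA)$ sharing a codimension-one face $R_0 \sse H_0$; as $R'$ ranges over the split regions, $R_0$ ranges over the regions of $\mA^{H_0}$. For a generic $v$ we may assume $\proj_v(R')$ lies in the interior of exactly one of $R_1, R_2$, say $R_1$. Then $\proj_v(R_1) = \proj_v(R')$ and their defining faces coincide, so $\pd_v(R_1) = \pd_v(R')$. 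For $R_2$, which lies on the far side of $H_0$ from $v$, Pythagoras forces the closest point of $\bar R_2$ to $v$ to lie on $\bar R_0$ and to equal $\proj_{p_0}(R_0)$; the face of $R_2$ containing this point is exactly the face of $R_0$ (viewed inside $H_0$) containing it, which yields $\pd_v(R_2) = \pd_{p_0}(R_0)$.

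Reassembling the projection sum over $R(\mA)$, the contributions from the unsplit regions together with the $R_1$-halves of split regions amount to $\sum_{R' \in R(\mA')}(-1)^{n - \pd_v(R')} t^{\pd_v(R')}$, which equals $\chi_{\mA'}(t)$ by induction. The contributions from the $R_2$-halves equal
\[
\sum_{R_0 \in R(\mA^{H_0})} (-1)^{n - \pd_{p_0}(R_0)} t^{\pd_{p_0}(R_0)} = -\sum_{R_0 \in R(\mA^{H_0})} (-1)^{(n-1) - \pd_{p_0}(R_0)} t^{\pd_{p_0}(R_0)} = -\chi_{\mA^{H_0}}(t),
\]
by the inductive hypothesis applied to $\mA^{H_0}$ in $\R^{n-1}$ at the point $p_0$, with the extra sign accounting for the dimension drop $n \mapsto n-1$. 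Adding the two contributions produces $\chi_{\mA'}(t) - \chi_{\mA^{H_0}}(t) = \chi_{\mA}(t)$, completing the induction.

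The main technical obstacle is the genericity bookkeeping. One must pin down a single condition on $v$ ensuring simultaneously that every $\proj_v(R)$ (for regions of $\mA$, of $\mA'$, and of $\mA^{H_0}$) lands in the interior of a unique face, that no $\proj_v(R')$ lies on the splitting hyperplane $H_0$, and that $p_0$ is itself generic for $\mA^{H_0}$ inside $H_0$. Each condition excludes $v$ from a finite union of algebraic subvarieties of positive codimension, so a simultaneously generic $v$ exists; making the open condition explicit and showing it remains valid when iterated through successive deletion-restriction steps is where most of the careful work lies.
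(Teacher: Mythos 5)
First, a point of comparison: the paper does not prove Theorem~\ref{Kab} at all --- it is imported verbatim from \cite{Kabluchko,LP} --- so there is no internal proof to measure you against. Your deletion--restriction induction is a sensible strategy, and most of it is sound: the base case, the bijection between regions of $\mA'$ split by $H_0$ and regions of $\mA^{H_0}$, the identification $\proj_v(R_1)=\proj_v(R')$ together with $\pd_v(R_1)=\pd_v(R')$ (the relevant faces of $\mA$ and $\mA'$ have the same dimension since the projection point avoids $H_0$), and the sign bookkeeping for the dimension drop all check out, modulo the genericity bookkeeping you explicitly defer.

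There is, however, a genuine gap in the treatment of $R_2$. You justify the key fact that $\proj_v(\overline{R_2})$ lands on $\overline{R_0}\sse H_0$ by saying that $R_2$ ``lies on the far side of $H_0$ from $v$'' and invoking Pythagoras; both halves of that justification fail. First, $v$ need not lie on the $R_1$ side of $H_0$: in $\R^2$ take $\mA'=\{x+y=10\}$, $H_0=\{y=0\}$, $R'=\{x+y<10\}$ and $v=(100,1)$; then $\proj_v(\overline{R'})=(54.5,-44.5)$ lies in the half of $R'$ with $y<0$, so the half playing the role of $R_2$ is the one with $y>0$, i.e.\ the \emph{same} side of $H_0$ as $v$. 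Second, even when $v$ and a region are on opposite sides of $H_0$, the nearest point of that region to $v$ need not lie on $H_0$ merely because the region has a facet there: for the same $v=(100,1)$ the region $\{y<0,\ x+y<10\}$ has nearest point $(54.5,-44.5)\notin H_0$. So ``far side plus Pythagoras'' proves nothing. The fact you need is nevertheless true, but its proof must use that $\proj_v(\overline{R'})$ lies in $\overline{R_1}\setminus H_0$: if $q=\proj_v(\overline{R_2})$ were not on $H_0$, then $q$ and $p=\proj_v(\overline{R'})$ lie on opposite open sides of $H_0$, the segment $[p,q]$ lies in the convex set $\overline{R'}$ and meets $H_0$ in a point $m\in\overline{R'}\cap H_0=\overline{R_0}\sse\overline{R_2}$, and strict convexity of $x\mapsto\|v-x\|^2$ along $[p,q]$ together with $\|v-p\|\le\|v-q\|$ forces $\|v-m\|<\|v-q\|$, contradicting the choice of $q$. (Pythagoras is then correctly used only in the final step, to identify $\proj_v(\overline{R_0})$ with $\proj_{p_0}(\overline{R_0})$.) With this repaired, and with the deferred genericity argument actually carried out --- noting that what you obtain is the identity for all $v$ in a dense open set, which is all this paper ever uses --- the induction goes through.
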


\section{The Braid Arrangement}\label{braidsec}
In this section, we show that for the braid arrangement $\mA_n$, the projection statistic defined in Theorem~\ref{Kab} can be expressed as a natural combinatorial statistic on the permutations labeling the regions of $\mA_n$.

\begin{definition}
    The \textit{braid arrangement} $\mA_n$ is defined as the collection of the following hyperplanes:
    $$\mA_n = \{H_{i,j} \mid 1 \leq i < j \leq n\}\,,$$
    where $H_{i,j} = \{(x_1, \ldots, x_n) \in \R^n \mid x_i - x_j = 0\}$.
\end{definition}

It is easy to see that each region of the braid arrangement is uniquely determined by a total ordering of the coordinates. As a consequence, there is a bijection between the regions of the braid arrangement $\mA_n$ and permutations of $[n]$. When considered in one line notation, the permutation indicates the relative order of the coordinates. For example, the permutation $\sigma = \sigma_1\sigma_2\ldots\sigma_n$ is associated to the region consisting of all points with coordinates $x_{\sigma_1} > x_{\sigma_2} > \ldots > x_{\sigma_n}$.

\begin{figure}[ht]
    \centering
    \includegraphics[width=0.8\linewidth]{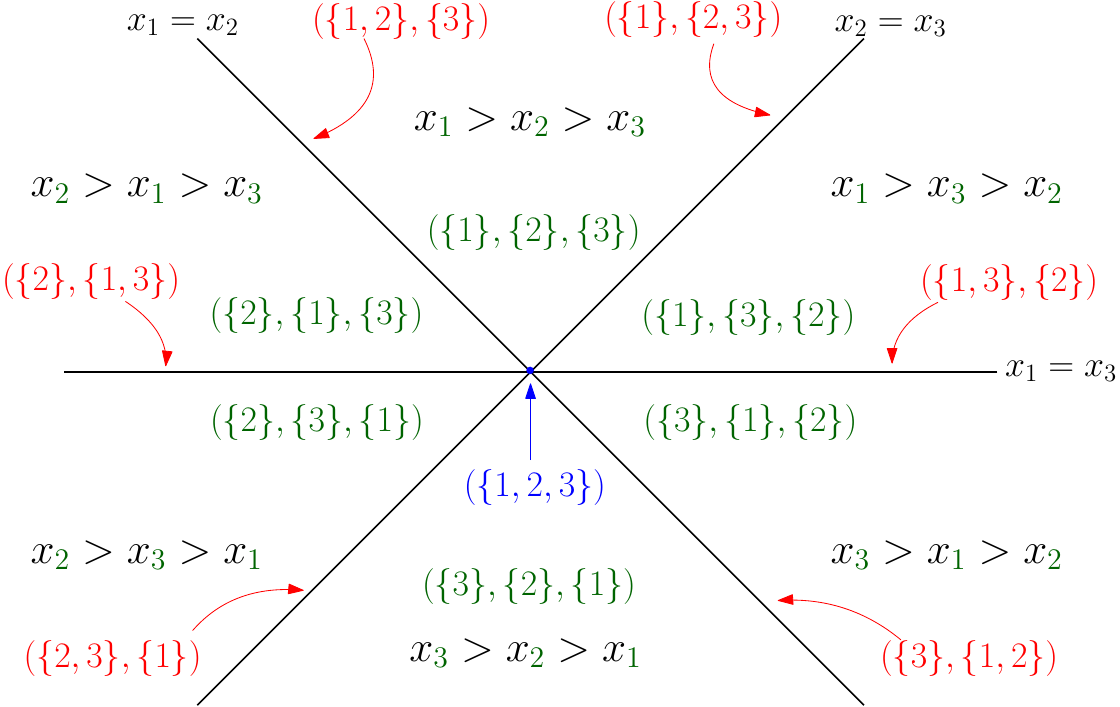}
    \caption[]{$\mathcal{A}_3$ with the faces labeled by ordered partitions of $[3]$.\footnotemark }
    \label{fig:BraidFaces}
\end{figure}
\footnotetext{Note that the hyperplane $H = \{(x_1, x_2, x_3) = x_1 + x_2 + x_3)\}$ is orthogonal to all the hyperplanes of $\mA_3$, so we represent $\mA_3$ by drawing its intersection with $H$.}

Moreover, the flats of the braid arrangement can be represented by partitions of $[n]$, the blocks indicating which coordinates are equal (that is, if $i$ and $j$ are in the same block, then $x_i = x_j$ for any point $x$ of the flat). Finally, the faces of the arrangement can be represented by ordered partitions of $[n]$, the relative order of the blocks indicating the relative order of the corresponding coordinates (that is, if $i$ is in a block before $j$, then $x_i > x_j$ for all points $x$ of the face). Note that the number of blocks of the partition is the dimension of the corresponding face. Given a face $F$ of $\mA$, we denote by $\Pi(F)$ the ordered partition labeling it, and given an ordered partition $\Pi$ of $[n]$, we denote by $F_{\Pi}$ the face it labels. 

We now state the main result of this section. 
\begin{definition}\label{RLmin}
    Let $\sigma = \sigma_1\ldots\sigma_n \in \Sn_n$ be a permutation. We say $\sigma_i$ is a \emph{right-to-left minimum} of $\sigma$ if $\sigma_i < \sigma_j$ for all $j > i$. We denote by $\text{RLmin}(\sigma)$ the number of right-to-left minima of $\sigma$. 
\end{definition}

\begin{theorem}\label{braidRLmin}
    Let $v = (v_1, \ldots, v_n) \in \R^n$ such that 
    \begin{equation}\label{vector}
        v_1 > \ldots > v_n \text{ and } v_i - v_{i+1} > n(v_{i+1} - v_n) \text{ for all } i < n.
    \end{equation}
    Let $\sigma \in \Sn_n$ be a permutation and let $R_{\sigma}$ be the region of the braid arrangement $\mA_n$ labeled by $\sigma$. Then the projection dimension of $v$ on $R_{\sigma}$ equals the number of right-to-left minima of $\sigma$, that is, $\pd_v(R_{\sigma}) = \textup{RLmin}(\sigma)$. 
\end{theorem}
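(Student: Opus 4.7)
The first step is to reinterpret $\proj_v(R_\sigma)$ as an isotonic regression. Since $R_\sigma = \{x : x_{\sigma_1} > \ldots > x_{\sigma_n}\}$, setting $w_i := v_{\sigma_i}$ and $y_i := x_{\sigma_i}$ reduces computing $\proj_v(R_\sigma)$ to finding the non-increasing sequence $y_1 \geq \ldots \geq y_n$ closest to $(w_1, \ldots, w_n)$ in $\ell^2$-norm. A classical fact is that the optimal $y$ is constant on the parts of a uniquely determined partition of $[n]$ into consecutive intervals (the ``pools''), with strictly decreasing block means. Pushing forward by $\sigma$ --- so that the $j$-th part of the resulting ordered partition of $[n]$ is the set of $\sigma_\ell$ for $\ell$ ranging over the $j$-th block --- yields precisely the ordered partition labeling the face of $R_\sigma$ which contains $\proj_v(R_\sigma)$ in its interior; hence $\pd_v(R_\sigma)$ equals the number of blocks.

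Let $0 = p_0 < p_1 < \ldots < p_k = n$ enumerate $0$ together with the right-to-left minimum positions of $\sigma$, and set $m_j := \sigma_{p_j}$. By the definition of RL-minimum, $m_j = \min\{\sigma_\ell : p_{j-1} < \ell \leq n\}$, so $m_1 < \ldots < m_k$, and every $\sigma_\ell$ with $\ell \in (p_{j-1}, n] \setminus \{p_j\}$ satisfies $\sigma_\ell \geq m_j + 1$. The plan is to show that the pools are exactly $B_j := \{p_{j-1}+1, \ldots, p_j\}$, which immediately gives $\pd_v(R_\sigma) = k = \textup{RLmin}(\sigma)$.

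To verify this I would use the least concave majorant (LCM) characterization of isotonic regression: the pools are the intervals between consecutive vertices of the LCM of the graph $\{(i, S_i) : 0 \leq i \leq n\}$ of partial sums $S_i := w_1 + \ldots + w_i$. Writing $\mu(a,b) := \frac{1}{b-a+1}\sum_{\ell=a}^b w_\ell$, this reduces to checking (a) $\mu(B_j) > \mu(B_{j+1})$ for $1 \leq j < k$ and (b) $\mu(p_{j-1}+1, i) \leq \mu(B_j)$ for every $p_{j-1} < i < p_j$. After translating so that $v_n = 0$ (the projection is translation-equivariant), the gap condition~\eqref{vector} becomes $v_i > (n+1) v_{i+1}$ for $i < n$ and $v_i \geq 0$ throughout. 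Since $\mu(B_j)$ contains the term $v_{m_j}$ together with non-negative summands, $\mu(B_j) \geq v_{m_j}/|B_j| > v_{m_j + 1}$, using $|B_j| \leq n < n + 1$. Conversely, any average $\mu(a,b)$ taken over an interval $[a,b] \subseteq (p_{j-1}, n]$ that avoids $p_j$ has every term at most $v_{m_j+1}$, so $\mu(a,b) \leq v_{m_j + 1}$. Taking $[a,b] = [p_{j-1}+1, i]$ proves (b), and taking $[a,b] = B_{j+1}$ (which avoids $p_j$ and lies in $(p_{j-1}, n]$) proves (a).

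The main obstacle is this last quantitative step: the single summand $v_{m_j}$ must outweigh $|B_j|$ copies of $v_{m_j + 1}$, and the factor of $n+1$ produced by~\eqref{vector} is exactly what is needed to override any block size $\leq n$. Once this domination is in place, every rival pooling has strictly smaller average, the LCM has vertices precisely at $\{p_0, \ldots, p_k\}$, and the theorem follows.
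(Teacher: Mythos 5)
Your argument is correct, and it takes a genuinely different route from the paper. The paper never mentions isotonic regression: it computes the projection onto each flat as a vector of block averages (Lemma~\ref{projchar}), defines a face to be \emph{good} when the projection onto its span lands in its relative interior, characterizes the good faces of $\mB_n$ as the ordered partitions with increasing block minima (Lemma~\ref{GFCond}, which is where the factor $n$ in~\eqref{vector} is spent), shows the projection onto a region always lands on a good face (Lemma~\ref{ProjOnGoodFace}), and then finishes with a two-sided argument: good faces of $R_\sigma$ have dimension at most $\textup{RLmin}(\sigma)$ because each block must contain a right-to-left minimum, and a smaller-dimensional face can be refined to a strictly closer good face. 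You instead recast $\proj_v(R_\sigma)$ as antitonic $\ell^2$ regression of $(v_{\sigma_1},\ldots,v_{\sigma_n})$ and read off the face directly from the least-concave-majorant/pool structure; your domination estimate $v_{m_j}/|B_j| > v_{m_j+1}$ plays exactly the role of the computation inside Lemma~\ref{GFCond}, and your conclusion is in fact slightly stronger than the theorem as stated, since you identify the face itself as the partition of $\sigma$ at its right-to-left minima (the paper only extracts this in a later remark, as a consequence of Theorem~\ref{graphSC}). What your approach buys is a one-pass identification of the optimal face with no separate upper/lower bound steps; what it costs is an appeal to the classical PAVA/LCM characterization, which you should cite (e.g.\ Barlow--Brunk), and a loss of portability: the paper's good-face lemmas are reused verbatim for general graphical arrangements in Section~\ref{gasec}, where the region is no longer a chain order and the isotonic-regression picture does not apply directly. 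Two small points to tidy up: handle the degenerate index $m_j=n$ (it forces $j=k$ and $|B_k|=1$, so no condition is actually needed there), and note explicitly that the paper's $\proj_v(R)$ is taken over the closed region, which is what your formulation $y_1\geq\cdots\geq y_n$ assumes.
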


\begin{figure}[ht]
    \centering
    \includegraphics[width=0.8\linewidth]{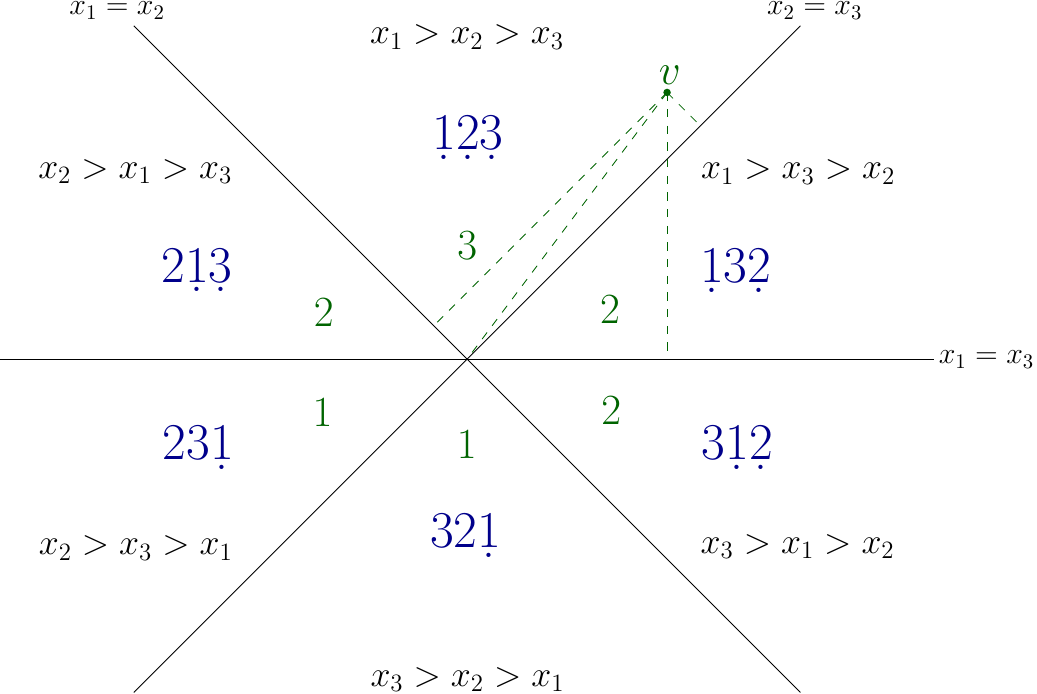}
    \caption{$\mB_3$ with projection from a point $v$ as in Equation~\eqref{vector} onto the regions, with the regions labeled by permutations (in blue) with right-to-left minima marked by dots, and dimension of projection/number of right-to-left minima (in green).}
    \label{fig:BraidProj}
\end{figure}

\begin{remark}
    Note that the set defined by Equation~\eqref{vector} is open and further, $v \in \R^n$ such that $v_i = (n+1)^{-i}$ for all $i \in [n]$ belongs to the set. Hence the set of $v \in \R^n$ for which the above theorem holds must contain generic points. 
\end{remark}

Observe that Theorem~\ref{braidRLmin} (along with Theorem~\ref{Kab} from ~\cite{Kabluchko, LP}) imply the following classical expression for the characteristic polynomial of the braid arrangement
\begin{equation*}
    \chi_{\mA_n}(t) = \sli_{\sigma\in \Sn_n} (-1)^{n}(-t)^{\text{RLmin}(\sigma)}.
\end{equation*}

The rest of this section is dedicated to the proof of Theorem~\ref{braidRLmin}.
 
Let $v \in \R^n$ be as in Equation~\eqref{vector}, and $R$ be a region of $\mA_n$. There is a unique face of $R$ that contains $\proj_v(R)$ in its interior. In order to prove the above theorem, we first prove some lemmas which allow us to characterize this face. 
\begin{lemma}\label{projchar}
    Let $v \in \R^n$ and let $\Pi = \{B_1, \ldots, B_k\}$ be an unordered set partition of $[n]$. If $i \in B_j = \{j_1, \ldots, j_s\}$, then the projection of $v$ on the flat labeled by $\Pi$ has the $i^{\text{th}}$ coordinate given by $\dfrac{v_{j_1} + \ldots + v_{j_s}}{s}.$
\end{lemma}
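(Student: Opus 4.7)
The plan is to prove this directly by characterizing the flat $L_\Pi$ associated to the partition $\Pi = \{B_1, \ldots, B_k\}$ as the linear subspace of $\R^n$ consisting of all points $x$ whose coordinates are constant on each block, and then minimizing the squared Euclidean distance to $v$ by taking advantage of the block decomposition.

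First I would observe that $L_\Pi = \{x \in \R^n : x_p = x_q \text{ whenever } p, q \text{ lie in the same block of } \Pi\}$, so a point $x \in L_\Pi$ is determined by a tuple $(y_1, \ldots, y_k) \in \R^k$ via the rule $x_i = y_j$ for $i \in B_j$. With this parameterization, the squared distance from $v$ to such a point splits block by block:
\begin{equation*}
\|x - v\|^2 \;=\; \sum_{j=1}^{k} \sum_{i \in B_j} (y_j - v_i)^2.
\end{equation*}

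Since the blocks are disjoint and each variable $y_j$ appears only in the $j$-th inner sum, the minimization decouples: it suffices to minimize each $f_j(y_j) := \sum_{i \in B_j} (y_j - v_i)^2$ separately. Differentiating gives $f_j'(y_j) = 2\sum_{i \in B_j} (y_j - v_i)$, which vanishes precisely at $y_j = \frac{1}{|B_j|}\sum_{i \in B_j} v_i$; equivalently, one may just invoke the standard fact that the sample mean minimizes the sum of squared deviations. For $B_j = \{j_1, \ldots, j_s\}$ this yields $y_j = (v_{j_1} + \cdots + v_{j_s})/s$, and since $\proj_v(L_\Pi)$ is the unique minimizer, its $i$-th coordinate for any $i \in B_j$ equals this value.

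I do not expect any real obstacle here; the main thing to be careful about is simply making explicit that the defining conditions of $L_\Pi$ are homogeneous linear equations (so $L_\Pi$ is a genuine linear subspace and a unique nearest point exists), and that the minimization separates cleanly across blocks so that the optimizer in each block is the arithmetic mean.
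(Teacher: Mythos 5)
Your proof is correct. It takes a mildly different route from the paper's: the paper writes down the explicit orthogonal basis $f_j = \sum_{i \in B_j} e_i$ of the flat and applies the standard projection formula $\proj_v(X) = \sum_j \frac{\langle v, f_j\rangle}{\|f_j\|^2} f_j$, reading off the coordinates directly, whereas you parameterize the flat by one value per block, observe that the squared distance decouples across blocks, and minimize each block's sum of squared deviations by the arithmetic mean. The two arguments encode the same underlying fact (projecting onto the span of an all-ones vector over a block is averaging over that block), but yours is slightly more elementary in that it avoids invoking orthogonality of the $f_j$ and the projection formula, at the cost of a short least-squares argument; the paper's is a one-line application of linear algebra once the basis is exhibited. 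Either is perfectly adequate here, and your care in noting that the flat is a genuine linear subspace (so the nearest point exists and is unique) is exactly the right point to make explicit.
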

\begin{proof}
    Let $X$ be the flat of $\mA_n$ labeled by $\Pi$. Note that $\proj_v(X)$ is the orthogonal projection of $v$ on $X$.  

    For $j \in [k]$, define $f_j = \sli_{i \in B_j} e_i$ where $e_i$ is the $i^{\text{th}}$ standard basis vector of $\R^n$. Note that the set $O = \{f_j\}_{j \in  [k]}$ forms an orthogonal basis of $X$. Hence, $\proj_v(X) = \sli_{j \in [k]} \dfrac{\langle v, f_j \rangle}{||f_j||^2}f_j$. 

    Now, if $i \in B_j$, the $i^{\text{th}}$ coordinate of $f_t$ for $t \neq j$ is $0$, and the $i^{\text{th}}$ coordinate of $f_j$ is $1$. Hence, the $i^{\text{th}}$ coordinate of $\proj_v(X)$ is $\dfrac{\langle v, f_j\rangle }{||f_j||^2} = \dfrac{v_{j_1} + \ldots + v_{j_s}}{s}$. 
\end{proof}

\begin{definition}\label{goodface}
    Let $\mA$ be a hyperplane arrangement in $\R^n$, and let $v \in \R^n$ be a point. Let $F$ be a face of $\mA$. We say $F$ is a \emph{$v$-face} if the projection of $v$ on span($F$) lies in the relative interior of $F$. Further, for $R$ a region of $\mA$, we say a face $F$ is a \emph{$v$-face of $R$} if $F$ is a $v$-face and $F$ is incident to $R$. 
\end{definition} 

The following lemma characterizes the $v$-faces of $\mA_n$.
\begin{lemma}\label{GFCond}
    Let $v \in \R^n$ be as in Equation~\eqref{vector}, and let $\Pi = (B_1, \ldots, B_k)$ be an ordered partition of $[n]$. Then $F_{\Pi}$ is a $v$-face of $\mA_n$ if and only if $\min(B_i) < \min(B_{i+1})$ for all $i < k$. 
\end{lemma}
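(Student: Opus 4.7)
The plan is to reduce the lemma to an inequality between block averages via Lemma~\ref{projchar}, and then to establish that inequality from the strong growth condition in Equation~\eqref{vector}. First, I note that $\mathrm{span}(F_\Pi)$ is the flat of $\mB_n$ labeled by the underlying unordered partition $\Pi$, so by Lemma~\ref{projchar} the point $p := \proj_v(\mathrm{span}(F_\Pi))$ has $i$-th coordinate equal to $\bar{v}_{B_j} := \tfrac{1}{|B_j|}\sum_{\ell \in B_j} v_\ell$ whenever $i \in B_j$. On $\mathrm{span}(F_\Pi)$, the face $F_\Pi$ is exactly the relatively open subset cut out by the strict inequalities $x_{B_1} > x_{B_2} > \cdots > x_{B_k}$ between the common block values. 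Hence $p$ lies in the interior of $F_\Pi$ if and only if $\bar{v}_{B_1} > \bar{v}_{B_2} > \cdots > \bar{v}_{B_k}$, and the lemma reduces to showing that this chain of strict inequalities holds if and only if $\min(B_i) < \min(B_{i+1})$ for every $i < k$.

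The heart of the argument is the following one-block estimate, which I will isolate as an auxiliary claim: for every nonempty $B \sse [n]$ with $m := \min(B) < n$, $v_{m+1} < \bar{v}_B \leq v_m$. The upper bound is immediate since every $v_\ell$ with $\ell \in B$ satisfies $v_\ell \leq v_m$. For the lower bound, I write $\delta_i := v_i - v_{i+1}$ and rewrite Equation~\eqref{vector} as $\delta_i > n \sum_{k>i}\delta_k$, which says each gap dominates the sum of all later gaps by a factor of $n$. Thus, for $\ell \in B$ with $\ell > m$, $v_m - v_\ell \leq v_m - v_n = \delta_m + \sum_{k>m}\delta_k < \tfrac{n+1}{n}\,\delta_m$. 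Averaging over the at most $|B| - 1 \leq n - 1$ such terms gives $v_m - \bar{v}_B \leq \tfrac{|B|-1}{|B|} \cdot \tfrac{n+1}{n}\,\delta_m \leq \tfrac{n^2-1}{n^2}\,\delta_m < \delta_m$, so $\bar{v}_B > v_m - \delta_m = v_{m+1}$.

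Given this estimate, both directions fall out quickly. If $\min(B_i) < \min(B_{i+1})$ for all $i < k$, set $m = \min(B_i)$ and $m' = \min(B_{i+1})$; then $m + 1 \leq m'$, so $v_{m+1} \geq v_{m'}$, and the estimate yields $\bar{v}_{B_i} > v_{m+1} \geq v_{m'} \geq \bar{v}_{B_{i+1}}$, proving the ``if'' direction. Conversely, if $\min(B_i) > \min(B_{i+1})$ for some $i$, swapping the roles of $B_i$ and $B_{i+1}$ in the same argument gives $\bar{v}_{B_i} < \bar{v}_{B_{i+1}}$, violating the required chain of averages and showing $F_\Pi$ is not a good face.

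The main obstacle will be calibrating the lower bound on $\bar{v}_B$: a block of size nearly $n$ can pull $v_m$ downward by averaging with values as small as $v_n$, and only the precise factor-of-$n$ dominance in Equation~\eqref{vector} keeps $\bar{v}_B$ strictly above $v_{m+1}$. This is also what reveals why the hypothesis is stated in that particular form rather than merely as $v_1 > \cdots > v_n$.
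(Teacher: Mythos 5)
Your proof is correct and follows essentially the same route as the paper: both reduce, via Lemma~\ref{projchar}, to showing that the block with the smaller minimum has the strictly larger average of $v$-coordinates, using the gap-dominance condition in Equation~\eqref{vector}. Your packaging of the key estimate as the sandwich $v_{m+1} < \bar{v}_B \le v_{m}$ (so that block averages land in disjoint intervals ordered by block minima) is a clean reorganization of the paper's direct two-block comparison, not a genuinely different argument.
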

\begin{proof} 
    Let $B = \{i_1, \ldots, i_k\}$ and $B' = \{i_1', \ldots, i_{\ell}'\}$ be two blocks of our partition, where we write their elements in increasing order. Then $\min(B) = i_1$ and $\min(B') = i_1'$. Without loss of generality, let $i_1 < i_1'$. 

    Let $X = \text{span}(F_{\Pi})$, that is, $X$ is the flat labeled by the unordered partition $\{B_1, \ldots, B_k\}$. 
    Let $\proj_v(X) = (p_1, \ldots, p_n)$. Note that as $\proj_v(X)$ lies in a unique face of $X$, it is enough to show that the ordered partition labeling this face satisfies $\min(B_i) < \min(B_{i+1})$ for all $i < k$. 

    Then, from Lemma \ref{projchar}, and the fact that $v_{i_1} - v_{i_1' -1} + v_{i_2} - v_n + \ldots + v_{i_k} - v_n > 0$, we have: 
    $$p_{i_1} = \dfrac{v_{i_1} + \ldots + v_{i_k}}{k} \geq \dfrac{(k-1)v_n + v_{i_1' - 1}}{k},$$
    and, as $i_1' \leq i_j'$ for all $j \in [k]$, as $v$ satisfies Equation~\eqref{vector}, we have $v_{i_j'} \leq v_{i_1'}$, and hence,
    $$p_{i_1'} = \dfrac{v_{i_1'} + \ldots + v_{i_{\ell'}}}{\ell} \leq v_{i_1'}.$$
    This gives us 
    \begin{align*}
        p_{i_1} - p_{i_1'} \geq \dfrac{v_{i_1' - 1} - v_{i_1'} - (k-1)(v_{i_1'} - v_n)}{k} 
        > \dfrac{v_{i_1' - 1} - v_{i_1'} - n(v_{i_1'} - v_n)}{k}
        >0.
    \end{align*}

    Hence $B$ must appear before $B'$ in the ordered partition.  
\end{proof}

\begin{lemma}\label{ProjOnGoodFace}
    Let $\mA$ be a hyperplane arrangement in $\R^n$, let $v \in \R^n$ be a point and let $R$ be a region of $\mA$.
    Then, $$\proj_v(R) = \proj_v(F)$$ where $F$ is a $v$-face of $\mA$ incident to $R$.
\end{lemma}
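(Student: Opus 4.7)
The plan is to take $p = \proj_v(R)$ (the unique closest point in the closed convex set $\overline{R}$ to $v$) and let $F$ be the unique face of $R$ whose relative interior contains $p$; then show that this $F$ is the good face we want, with $\proj_v(F) = p$.

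First I would use the fact that since $p$ lies in the relative interior of $F$, and $F$ is relatively open in $\text{span}(F)$, there exists $\eps > 0$ such that the set $B(p,\eps) \cap \text{span}(F)$ is contained in $F$, and hence in $\overline{R}$. This local containment is the key geometric observation: small displacements of $p$ within $\text{span}(F)$ stay inside the region.

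Next I would invoke convexity of the function $f(x) = \|x - v\|^2$ restricted to the affine subspace $\text{span}(F)$. Because $p$ minimizes $f$ over all of $\overline{R}$, and the neighborhood $B(p,\eps) \cap \text{span}(F)$ lies in $\overline{R}$, the point $p$ is a local minimum of $f$ on $\text{span}(F)$. Strict convexity of $f$ on this subspace then forces $p$ to be the unique global minimum, so $p = \proj_v(\text{span}(F))$. Since $p$ lies in the interior of $F$ by construction, this exactly says $F$ is a good face in the sense of Definition~\ref{goodface}.

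Finally, $\proj_v(F)$ (the closest point in $F$ to $v$) must coincide with $\proj_v(\text{span}(F)) = p$, because $F \subseteq \text{span}(F)$ and the minimizer over the larger set already lies in $F$. This gives $\proj_v(R) = p = \proj_v(F)$, and $F$ is incident to $R$ by construction. I do not foresee a serious obstacle here; the only point to handle carefully is the verification that $p$ truly is a local minimum of $f$ when restricted to $\text{span}(F)$, which hinges on the relative-interior containment in the first step.
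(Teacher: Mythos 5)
Your proposal is correct and is essentially the paper's argument: both identify $F$ as the unique face containing $\proj_v(R)$ in its relative interior and then show this forces $\proj_v(R)=\proj_v(\text{span}(F))$, the paper by a line-segment/contradiction argument and you by observing that a local minimum of the strictly convex distance function on $\text{span}(F)$ (guaranteed by the relative-interior neighborhood) must be the global one. The two phrasings encode the same geometric fact, so no substantive difference.
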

\begin{proof}
    Clearly, there is a unique face $F$ of $R$ that contains $\proj_v(R)$ in its relative interior. Hence, $\proj_v(R) = \proj_v(F)$, and we need to show that $F$ is a $v$-face. 

    Suppose $F$ is not a $v$-face. Then, by the definition of a $v$-face, for $X = \s(F)$, we have $\proj_v(X) \neq \proj_v(F)$. Let $L$ be the line joining $\proj_v(X)$ and $\proj_v(F)$. As $\proj_v(X)$ is the orthogonal projection of $v$ onto $X$ and $L$ lies in $X$, the distance between $v$ and a point $p \in L$ increases as $p$ moves from $\proj_v(X)$ to $\proj_v(F)$. But then $\proj_v(F)$ must be on the boundary of $F$ as otherwise we would have points on $L$ in $F$ closer to $v$, which contradicts the fact that $\proj_v(F)$ minimizes distance. This contradicts the fact that $\proj_v(F)$ is in the relative interior of $F$, so $F$ must be a $v$-face.
\end{proof}

Now, we finally have enough information to prove Theorem~\ref{braidRLmin}.
\begin{proof}[Proof of Theorem~\ref{braidRLmin}]
    Let $\sigma = \sigma_1\ldots \sigma_n$. For any face of $R_{\sigma}$, the ordered partition labeling it is of the form $(\{\sigma_1, \ldots, \sigma_{i_1}\}, \{\sigma_{i_1 + 1}, \ldots, \sigma_{i_2}\}, \ldots, \{\sigma_{i_k + 1}, \ldots, \sigma_{n}\})$ for some $k \geq 0$, and $0 < i_1 < \ldots < i_{k}< n$. 

    Now, by Lemma~\ref{ProjOnGoodFace}, $\proj_v(R_{\sigma})$ must lie on a $v$-face of $R_{\sigma}$. Lemma~\ref{GFCond} further gives us that these $v$-faces are precisely those labeled by ordered partitions of $[n]$ of the form $\Pi = (B_1, \ldots, B_k)$ such that $\min(B_i) < \min(B_{i+1})$ for all $i < k$.

    This implies that if $F_{\Pi}$ is a $v$-face of $R_{\sigma}$, then each block $B_i$ contains at least one right-to-left minimum. If not, suppose $j$ is the greatest index such that $B_j$ does not contain a right-to-left minimum. Then, as $\sigma_n $ is always a right-to-left minimum, $j<k$ and further $\min(B_j) > \min(B_{j+1})$, contradicting that $F_{\Pi}$ is a $v$-face. Hence, the dimension of a $v$-face cannot be greater than \text{RLmin($\sigma$)}.

    Now, suppose that $F_{\sigma}$ is the face of $R_{\sigma}$ that $v$ projects into. If the dimension of $F_{\sigma}$ is less than RLmin($\sigma$), there is a block of $\Pi(F_{\sigma})$ that has more than one right-to-left minimum. We can refine the partition (by breaking this block into two blocks, each containing at least one right-to-left minimum) to get a partition corresponding to a larger face $F'$ which contains $F_\sigma$. This gives us the projection from $v$ onto $F'$ must be of shorter length (strictly shorter as $F'$ is a $v$-face) than the projection onto $F_{\sigma}$, which is a contradiction. 

    Hence we have $\pd_v(R_{\sigma}) = \text{RLmin($\sigma$)}$.
\end{proof}

\begin{remark}
    For a point $v = (v_1, \ldots, v_n) \in\R^n$ such that $v_1 > \ldots > v_n$ and $v_i - v_{i+1} > n(v_1 - v_i)$ for all $i < n$, one can similarly show that for a permutation $\sigma = \sigma_1\ldots\sigma_n \in \Sn_n$, $$\pd_v(R_{\sigma}) = \text{LRmax}(\sigma) = \# \text{ of left-to-right maxima of } \sigma, $$ where $\sigma_i$ is a left-to-right maximum of $\sigma$ if $\sigma_i > \sigma_j$ for all $j<i$. 
\end{remark}

\section{Graphical Arrangements}\label{gasec}
In this section, we show that for a graphical arrangement, the projection statistic corresponds to Greene and Zaslavsky's statistic from~\cite{GZ}. We also show that this is a generalization of our results from Section~\ref{braidsec}. 

\begin{definition}
    Let $G = ([n], E)$ be a graph. The \textit{graphical arrangement} $\mA_G$ is defined as the collection of the following hyperplanes:
    $$\mA_G := \{H_{i,j} \mid \{i, j\} \in E, \, i<j\}\,,$$
    where $H_{i,j} = \{(x_1, \ldots, x_n) \in \R^n \mid x_i - x_j = 0\}$.
\end{definition}

\begin{definition}
    Let $R \in R(\mA_G)$. We associate to $R$ an acyclic orientation of $G$, denoted by $\gamma_R$, by directing the edge $\{i,j\} \in E$ towards $i$ if and only if $x_i \geq x_j$ in $R$. 
\end{definition}

We now recall a classical fact. 

\begin{lemma}\cite{BG2}
    The mapping $\gamma$ which associates to each region $R \in R(\mA_G)$ the orientation $\gamma_R$ is a bijection between $R(\mA_G)$ and the set of acyclic orientations of $G$. 
\end{lemma}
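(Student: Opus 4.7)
The plan is to verify that $\gamma$ is (i) well-defined as a map $R(\mA_G) \to \{\text{acyclic orientations of } G\}$ and then (ii) injective and (iii) surjective. The definition requires that on a region $R$, the sign of $x_i - x_j$ be determined for each edge $\{i,j\}$; this is automatic because $R$ is a connected component of $\R^n \setminus \bigcup_{H \in \mA_G} H$, so $R$ meets no hyperplane $H_{i,j}$ with $\{i,j\} \in E$, and hence the linear form $x_i - x_j$ has constant, nonzero sign on $R$. Thus each edge receives a single well-defined orientation.

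Next I would check that $\gamma_R$ is acyclic. If $i_1 \to i_2 \to \cdots \to i_\ell \to i_1$ were a directed cycle in $\gamma_R$, then by the rule ``edge $\{i,j\}$ points toward $i$ iff $x_i > x_j$ on $R$'' we would obtain, for any point $x \in R$, the chain of strict inequalities $x_{i_1} < x_{i_2} < \cdots < x_{i_\ell} < x_{i_1}$, a contradiction. Hence $\gamma_R$ is indeed an acyclic orientation.

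For injectivity, suppose $\gamma_R = \gamma_{R'}$. Then for every edge $\{i,j\} \in E$, the forms $x_i - x_j$ take the same sign on $R$ and on $R'$, so $R$ and $R'$ lie on the same side of every hyperplane of $\mA_G$. Two regions of a hyperplane arrangement that agree on every sign pattern coincide, so $R = R'$. For surjectivity, given an acyclic orientation $\alpha$ of $G$, I would invoke the standard fact that every acyclic digraph admits a topological order: choose a bijection $\pi \colon [n] \to [n]$ such that whenever $\{i,j\}$ is oriented toward $i$ in $\alpha$, we have $\pi(i) < \pi(j)$. Setting $v = (v_1, \ldots, v_n)$ with $v_i = -\pi(i)$ produces a point with $v_i > v_j$ exactly when $\{i,j\}$ points to $i$ in $\alpha$; since $v$ lies off every hyperplane $H_{i,j}$ with $\{i,j\} \in E$, it sits in a unique region $R$, and by construction $\gamma_R = \alpha$.

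The only step with any real content is surjectivity, and even there the work is just producing a topological order; all other steps reduce to the basic observation that a region is exactly a maximal connected sign-constant set for the defining linear forms. So I expect the write-up to be short, with the main choice being whether to cite the existence of a topological order or to prove it in passing by induction on $n$ (remove a source of $\alpha$ and recurse).
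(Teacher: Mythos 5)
Your proof is correct. The paper does not prove this lemma at all --- it is stated as a classical fact and cited to the literature --- so there is no in-paper argument to compare against; your write-up (constant sign of $x_i - x_j$ on a region for well-definedness, strict inequality chains for acyclicity, convexity of sign classes for injectivity, and a topological order of the acyclic orientation to build a witness point for surjectivity) is exactly the standard argument one would supply, and every step checks out.
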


\begin{figure}[ht]
    \centering
    \includegraphics[width=0.8\linewidth]{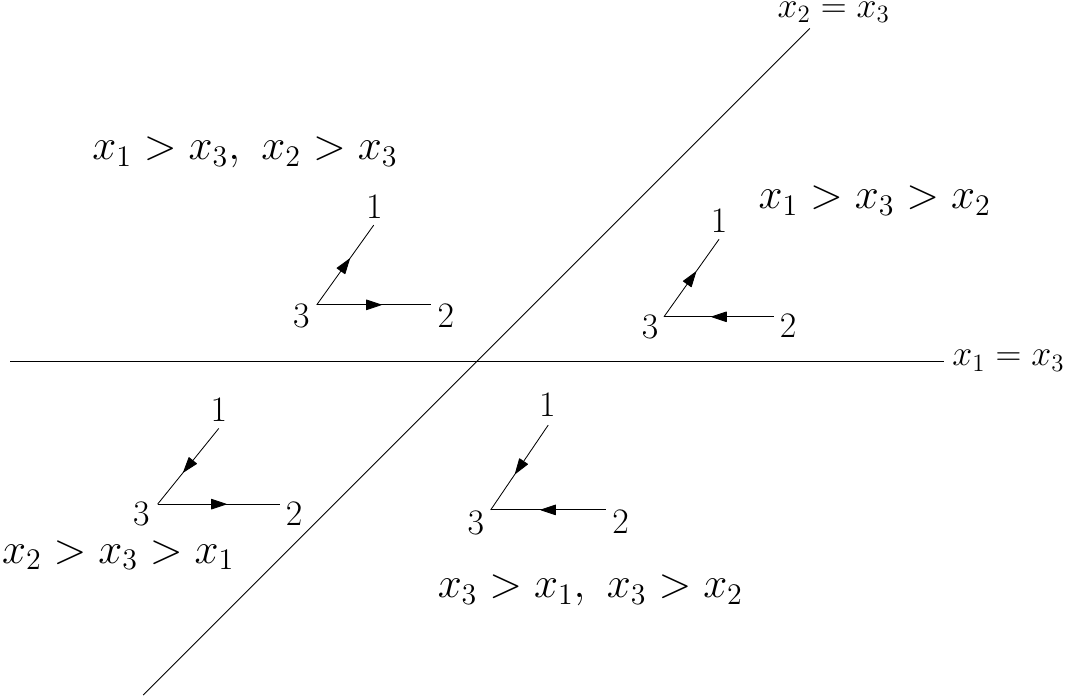}
    \caption{A graphical arrangement with the regions labeled by the acyclic orientations given by $\gamma$.}
    \label{fig:GraphicalLabeling}
\end{figure}

Recall that the faces of the braid arrangement $\mA_n$ can be identified with ordered partitions of $[n]$. As $\mA_G$ is a sub-arrangement of $\mA_n$, each flat of $\mA_G$ corresponds to a partition of $[n]$ and each face of $\mA_G$ corresponds to a set of ordered partitions of $[n]$. 

Note that for an acyclic orientation $\gamma$ of $G$, the source components $(S_1, \ldots, S_k)$ of $\gamma$ form an ordered partition of $[n]$. We denote this ordered partition by $\Pi(\gamma)$.

The main result of this section is the following:
\begin{theorem}\label{graphSC}
    Let $v = (v_1, \ldots, v_n) \in \R^n$ be such that 
    \begin{equation}\label{vec2}
        \forall i \in [n-1], \text{ }v_i > (6n^2 + 1)v_{i+1} \text{ and } v_n > 0. 
    \end{equation}
     Let $G = ([n],E)$ be a graph, let $R$ be a region of the graphical arrangement $\mA_G$, and let the acyclic orientation $\gamma_R$ have $k$ source components. 

     Then, $\pd_v(R) = k$, that is, the projection dimension of $v$ on $R$ equals the number of source components of $\gamma_R$. In fact, the face of $R$ that $\proj_v(R)$ lies in the relative interior of is $F_{\Pi(\gamma_R)}$.
\end{theorem}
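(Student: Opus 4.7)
The plan is to identify $F := F_{\Pi(\gamma_R)}$ as a face of $R$, compute the projection of $v$ onto $\operatorname{span}(F)$, verify this projection lies in the relative interior of $F$, and finally check that it coincides with $\proj_v(R)$. Writing $m_\ell := \min(S_\ell)$, the key preliminary fact $R_{m_\ell} \subseteq \bigcup_{\ell' \leq \ell} S_{\ell'}$ follows by induction on $\ell$ directly from Definition~\ref{SourceComponents}. From it one deduces $m_1 < m_2 < \cdots < m_k$, that every vertex of $S_\ell$ is reachable from $m_\ell$ along a directed path staying inside $S_\ell$ (so $G[S_\ell]$ is connected and $m_\ell$ is the unique source of $\gamma_R|_{S_\ell}$), and that every edge $\{i,j\} \in E$ with $i \in S_\ell$, $j \in S_m$, $\ell < m$, is oriented by $\gamma_R$ from $j$ to $i$ (otherwise $j \in R_{m_\ell} \subseteq \bigcup_{\ell' \leq \ell} S_{\ell'}$, contradicting $j \in S_m$). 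The last point implies $F \subseteq \overline{R}$, so $F$ is a face of $R$; connectedness of each block in $G$ forces $\operatorname{span}(F)$ to coincide with the $\mB_n$-flat labeled by the unordered partition $\{S_1, \ldots, S_k\}$, and Lemma~\ref{projchar} then yields
\[
p := \proj_v(\operatorname{span}(F)), \qquad p_i = \bar v_\ell := \frac{1}{|S_\ell|} \sum_{t \in S_\ell} v_t \quad (i \in S_\ell).
\]

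To check $p \in \operatorname{int}(F)$ as an $\mA_G$-face, I need $\bar v_\ell > \bar v_m$ whenever $\ell < m$ and some edge of $G$ joins $S_\ell$ to $S_m$. Since $m_\ell < m_m$, Equation~\eqref{vec2} gives $v_{m_\ell} \geq (6n^2+1) v_{m_m}$; combined with $\bar v_\ell \geq v_{m_\ell}/|S_\ell| \geq v_{m_\ell}/n$ (positivity of the $v_i$) and $\bar v_m \leq v_{m_m}$ (monotonicity), the inequality holds with considerable slack. The same estimate shows $v_i < \bar v_\ell$ for every $i \in S_\ell \setminus \{m_\ell\}$, a fact used below.

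The main obstacle is to show $p = \proj_v(R)$, equivalently the variational inequality $\langle v - p, r - p \rangle \leq 0$ for every $r \in \overline R$. Since $\sum_{i \in S_\ell}(v_i - \bar v_\ell) = 0$, subtracting $r_{m_\ell}$ block-wise does not alter the inner product, giving
\[
\langle v - p, r - p \rangle = \sum_{\ell=1}^{k} \sum_{i \in S_\ell \setminus \{m_\ell\}} (v_i - \bar v_\ell)\,(r_i - r_{m_\ell}).
\]
For such $i$, the first factor is $\leq 0$ by the estimate of the previous paragraph, while the second is $\geq 0$: the defining inequalities of $\overline R$ force $r_{\text{head}} \geq r_{\text{tail}}$ along each arc of $\gamma_R$, and $i$ is reachable from $m_\ell$ via a directed path in $\gamma_R|_{S_\ell}$. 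Every summand is therefore non-positive, proving the variational inequality. Consequently $\proj_v(R) = p$ lies in $\operatorname{int}(F)$ and $\pd_v(R) = \dim F = k$, as claimed.
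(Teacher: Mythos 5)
Your proof is correct, and it takes a genuinely different route from the paper. The paper funnels the argument through its ``good face'' machinery: Lemma~\ref{ProjOnGoodFace} forces $\proj_v(R)$ to land on a good face, Lemma~\ref{ubdim} shows by an inductive containment argument on ordered partitions that $F_{\Pi(\gamma_R)}$ is good and that no good face of $R$ has dimension exceeding $k$, and Lemma~\ref{lbdim} then compares $\|v-p_B\|^2$ with $\|v-p_D\|^2$ for every competing good face via explicit expansions with error terms $\eps_1,\eps_2,\eps_3$ controlled by the growth condition~\eqref{vec2}. You instead guess the answer $p=\proj_v(\s(F_{\Pi(\gamma_R)}))$ outright and certify it by the variational (obtuse-angle) characterization of projection onto the closed convex set $\overline{R}$: the block-wise identity $\sum_{i\in S_\ell}(v_i-\bar v_\ell)=0$ lets you replace $p_i$ by $r_{m_\ell}$ in the inner product, after which each summand $(v_i-\bar v_\ell)(r_i-r_{m_\ell})$ is non-positive because $v_i<\bar v_\ell$ for non-minimal $i$ (by~\eqref{vec2}) and $r_i\ge r_{m_\ell}$ by monotonicity of coordinates along directed paths of $\gamma_R$. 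Your preliminary facts about source components ($m_1<\cdots<m_k$, connectivity of $G[S_\ell]$, orientation of cross-block edges) are all sound and are what make $F_{\Pi(\gamma_R)}$ a face of $R$ whose span is a flat of $\mA_G$ of dimension $k$. What your approach buys is brevity and transparency: it bypasses the delicate quadratic estimates of Lemma~\ref{lbdim} entirely and makes visible exactly which inequalities on $v$ are used (positivity, monotonicity, and $v_i<\bar v_\ell$ for non-minimal block elements). What the paper's approach buys is reusable infrastructure: the good-face lemmas and the containment claim~\eqref{containment} are invoked again in Sections~\ref{braidsec} and~\ref{nuisec}, and Lemma~\ref{ubdim} isolates a purely combinatorial statement (no good face of $R$ exceeds dimension $k$) that is of independent interest.
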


\begin{figure}[ht]
    \centering
    \includegraphics[width=0.8\linewidth]{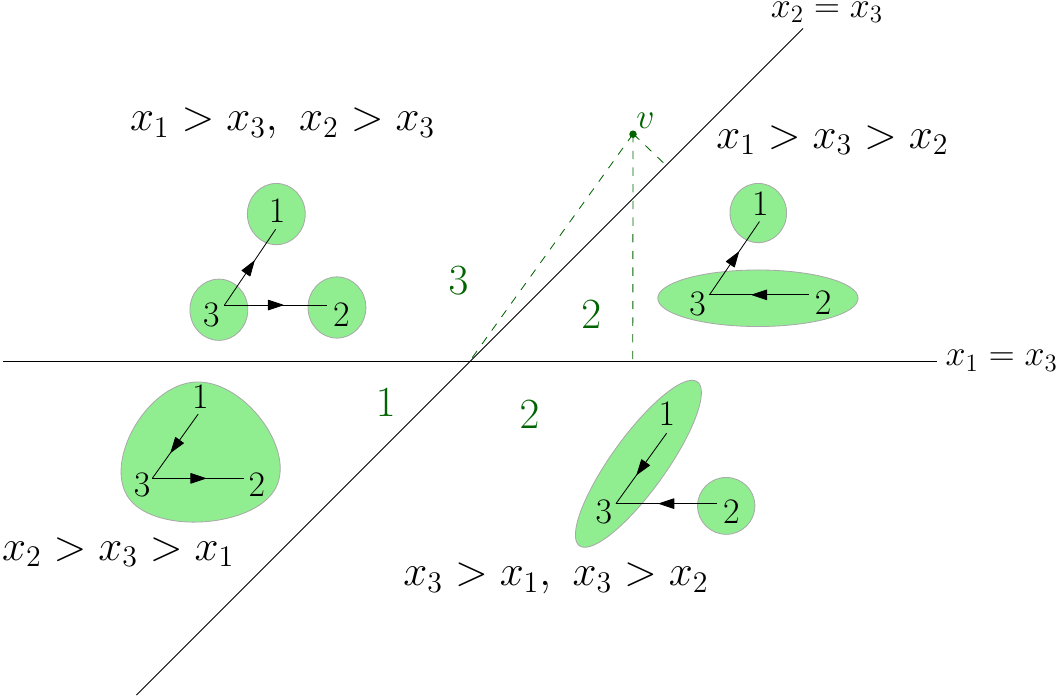}
    \caption{A graphical arrangement with projection from a point $v$ as in Equation \eqref{vec2} onto the regions, with the regions labeled by acyclic orientations with source components, and dimension of projection/number of source components in green.}
    \label{fig:GraphicalProjection}
\end{figure}

\begin{remark}
    Note that the set defined by Equation~\eqref{vec2} is open and further, $v \in \R^n$ such that $v_i = (6n^2 + 2)^{-i}$ for all $i \in [n]$ satisfies the conditions of Equation~\eqref{vec2}. Hence the set of $v\in\R^n$ for which the above theorem holds contains generic points. Further, any point satisfying the conditions of Equation~\eqref{vec2} will also satisfy the conditions of Equation~\eqref{vector}. Hence, any results that hold for a point $v$ as in Equation~\eqref{vector} will also hold for a point $v$ as in Equation~\eqref{vec2}.
\end{remark}

\begin{remark}
    In the case of the braid arrangement, Theorem~\ref{graphSC} along with the following lemma imply Theorem~\ref{braidRLmin}. 
    In fact, they imply a stronger result as they show that for a region $R_{\sigma}$ of the braid arrangement labeled by  $\sigma \in \Sn_n$, the face that $\proj_v(R_{\sigma})$ lies in the relative interior of is labeled by the ordered partition formed by partitioning $\sigma$ at its right-to-left minima. 
\end{remark}

\begin{lemma}\label{rlminsc}
    The number of right-to-left minima of a permutation $\sigma \in \Sn_n$ is equal to the number of source components of  the acyclic orientation $\gamma_{R_{\sigma}}$ of $K_n$, where $R_{\sigma}$ is the region of $\mA_n$ labeled by $\sigma$. 
\end{lemma}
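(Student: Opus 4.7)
The plan is to identify the source components of $\gamma_{R_{\sigma}}$ explicitly and read off that their number equals $\textup{RLmin}(\sigma)$. First I would unpack the orientation: in the region $R_{\sigma}$ the inequalities $x_{\sigma_1} > x_{\sigma_2} > \ldots > x_{\sigma_n}$ force every edge $\{\sigma_a, \sigma_b\}$ with $a < b$ to be directed from $\sigma_b$ to $\sigma_a$, so the out-neighbors of $\sigma_i$ are exactly $\sigma_1, \ldots, \sigma_{i-1}$. Following the directed path $\sigma_i \to \sigma_{i-1} \to \ldots \to \sigma_1$ and noting that no arrow can leave the initial segment $\{\sigma_1, \ldots, \sigma_i\}$ yields the reachability formula $R_{\sigma_i} = \{\sigma_1, \ldots, \sigma_i\}$ for every $i \in [n]$.

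Let $i_1 < i_2 < \ldots < i_k = n$ be the positions of the right-to-left minima of $\sigma$, so $k = \textup{RLmin}(\sigma)$; by the very definition of a right-to-left minimum, the values $\sigma_{i_1} < \sigma_{i_2} < \ldots < \sigma_{i_k}$ are strictly increasing. I would then prove by induction on $j$ that $S_j = \{\sigma_{i_{j-1}+1}, \ldots, \sigma_{i_j}\}$, where $i_0 = 0$. The inductive step rests on the following auxiliary claim: for each $1 \leq j \leq k$, the smallest label in the positional tail $\{\sigma_{i_{j-1}+1}, \ldots, \sigma_n\}$ is exactly $\sigma_{i_j}$. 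Indeed, the position $\ell^{\ast}$ attaining this minimum satisfies $\sigma_q > \sigma_{\ell^{\ast}}$ for every $q > \ell^{\ast}$ (any such $q$ is in the tail, so its value is strictly greater than the minimum), hence $\ell^{\ast}$ is itself a right-to-left minimum of $\sigma$ and therefore equals some $i_p$; the strict monotonicity of values at right-to-left minima, combined with $\ell^{\ast} > i_{j-1}$, forces $p = j$.

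Given this claim, the induction is routine: assuming $S_1 \cup \ldots \cup S_{j-1} = \{\sigma_1, \ldots, \sigma_{i_{j-1}}\}$, the minimum label in the complement $[n] \setminus (S_1 \cup \ldots \cup S_{j-1})$ is $\sigma_{i_j}$, and the reachability formula of the first paragraph gives $S_j = R_{\sigma_{i_j}} \setminus \{\sigma_1, \ldots, \sigma_{i_{j-1}}\} = \{\sigma_{i_{j-1}+1}, \ldots, \sigma_{i_j}\}$. After $k$ steps we have exhausted $[n]$ and produced exactly $k$ nonempty source components, matching $\textup{RLmin}(\sigma) = k$.

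I do not anticipate a real obstacle; the only point needing care is the auxiliary claim, which is where the positional notion of a right-to-left minimum meets the label-based minimum used in the recursive definition of source components, and isolating it keeps the induction clean.
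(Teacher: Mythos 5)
Your proof is correct, but it takes a more constructive route than the paper. You run the recursive definition of source components explicitly: using the reachability formula $R_{\sigma_i} = \{\sigma_1,\ldots,\sigma_i\}$ and the auxiliary claim that the smallest label in each positional tail sits at the next right-to-left-minimum position, you identify each $S_j$ as the consecutive block $\{\sigma_{i_{j-1}+1},\ldots,\sigma_{i_j}\}$ between successive right-to-left minima. The paper instead argues element-wise: it characterizes ``the value $i$ is not a right-to-left minimum'' and ``$i$ is not the least element of its source component'' by the same condition (the existence of $j<i$ with $x_j \le x_i$ throughout $R_\sigma$, equivalently $j$ appearing after $i$ in one-line notation), so the two counts agree without ever exhibiting the components. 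Your version proves a strictly stronger statement --- that $\Pi(\gamma_{R_\sigma})$ is exactly the ordered partition obtained by cutting $\sigma$ at its right-to-left minima --- which is precisely the fact the paper invokes later (in the remark after Theorem~\ref{graphSC} and in the proof of Lemma~\ref{lexfaceproj}) but only obtains there by combining Theorem~\ref{graphSC} with this lemma; the paper's argument is shorter but yields only the equality of counts. Your isolation of the auxiliary claim, where the positional notion of right-to-left minimum meets the label-based minimum in Definition~\ref{SourceComponents}, is exactly the right place to put the care.
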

\begin{proof}
    Let $\sigma \in \Sn_n$. Then, $i \in [n]$ is not a right-to-left minimum of $\sigma$ if and only if there is a $j < i$ such that $j$ appears after $i$ in the one line notation of $\sigma$. The only way this can happen is if $x_i \geq x_j$ for all  $x \in R_{\sigma}$. 

    Further, each source component of $\gamma_{R_{\sigma}}$ can be identified with the least element in that source component. Then $i \in [n]$ is not the least element in its source component if and only if there exists $j < i$ such that $i$ is $j$-reachable. Let the path from $j$ to $i$ be $jk_1\ldots k_r i$. Then, $x_j \leq x_{k_1} \leq \ldots \leq x_{k_r} \leq x_i$ for all $x \in R_{\sigma}$. Hence, $i \in [n]$ is not the least element in its source component if and only if there exists $j < i$ such that $x_j \leq x_i$ for all $x \in R_{\sigma}$. 

    Hence, $i \in [n]$ is not a right-to-left minimum of $\sigma$ if and only if it is not the least element in its source component. 
     
    So, the number of right-to-left minima of $\sigma \in \Sn_n$ is equal to the number of source components of the acyclic orientation $\gamma_{R_{\sigma}}$ of $K_n$. 
\end{proof}

Further, Theorem~\ref{graphSC} along with Theorem~\ref{Kab} and Equation~\eqref{CC} give us an alternative proof of Greene and Zaslavsky's result about the interpretation of the coefficients of the characteristic polynomial (Theorem~\ref{GZProj}).  

The rest of this section is devoted to the proof of Theorem~\ref{graphSC}.

\begin{lemma} \label{ubdim}
    Let $G = ([n], E)$ be a graph, let $R$ be a region of $\mA_G$, and let $\gamma_R$ be the acyclic orientation of $G$ labeling $R$. Let $\Pi(\gamma_R) = (B_1, \ldots, B_k)$. Then, for $v \in \R^n$ as in Equation~\eqref{vector}, $F_{\Pi(\gamma_R)}$ is a $v$-face of $R$ and $R$ does not have a $v$-face of dimension greater than $k$. 
\end{lemma}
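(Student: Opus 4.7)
The plan is to handle the two claims separately, after first establishing a simple averaging comparison made possible by the rapid-decay hypothesis on $v$. Writing $\bar v_A := \frac{1}{|A|}\sum_{i \in A} v_i$ for a nonempty $A \subseteq [n]$, I will first verify: whenever $\min(A) < \min(B)$, one has $\bar v_A > \bar v_B$. Indeed, positivity of $v$ (from $v_n > 0$) gives $\bar v_A \geq v_{\min(A)}/n$, monotonicity gives $\bar v_B \leq v_{\min(B)}$, and the hypothesis $v_i > (6n^2+1)v_{i+1}$ yields $v_{\min(A)} > (6n^2+1)v_{\min(B)}$, which comfortably beats the factor of $n$.

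For the first claim, I would apply Lemma~\ref{projchar} to see that the projection of $v$ onto $\s(F_{\Pi(\ga_R)})$ has coordinate $\bar v_{S_\ell}$ in position $i$ for each $i \in S_\ell$. To show this lies in the relative interior of $F_{\Pi(\ga_R)}$ as a face of $\mA_G$, I need to verify that for every edge $\{i,j\} \in E$ with $i \in S_\ell$, $j \in S_{\ell'}$ and $\ell < \ell'$, the strict inequality $\bar v_{S_\ell} > \bar v_{S_{\ell'}}$ holds and agrees with the orientation imposed by $\ga_R$. The averaging comparison supplies the strict inequality since $m_\ell < m_{\ell'}$. The orientation match follows from a short reachability check: if the edge were oriented $i \to j$, then $j \in R_{m_\ell}$, forcing $j$ into $S_\ell$ or some earlier source component, contradicting $j \in S_{\ell'}$.

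For the second claim, let $F'$ be any good face of $R$ with unordered partition $\pi'$. I will show $\{\min(B) : B \in \pi'\} \subseteq \{m_1,\ldots,m_k\}$, which immediately gives $|\pi'| \leq k$, i.e., $\dim F' \leq k$. Fix a block $B \in \pi'$, set $a := \min(B)$, and let $\ell$ be such that $a \in S_\ell$. Supposing $a > m_\ell$, I take a directed path $m_\ell = u_0 \to u_1 \to \cdots \to u_s = a$ in $\ga_R$ (which exists since $a \in R_{m_\ell}$) and denote by $B(u_i)$ the block of $\pi'$ containing $u_i$. For each edge $\{u_{i-1}, u_i\}$, either it is tight in $F'$ and $B(u_{i-1}) = B(u_i)$, or it is not tight, in which case the good-face condition applied to this edge forces $\bar v_{B(u_i)} > \bar v_{B(u_{i-1})}$, and the averaging comparison then yields $\min(B(u_i)) < \min(B(u_{i-1}))$. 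Chaining gives $\min(B) = \min(B(u_s)) \leq \min(B(u_0)) \leq m_\ell < a$, contradicting $a = \min(B)$. Hence $a = m_\ell$, so distinct blocks of $\pi'$ contribute distinct source-component minima.

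The main obstacle is the chaining argument in the second claim: converting a directed path in $\ga_R$ into a monotone sequence of block minima requires the \emph{bijective} correspondence between the $\min$-ordering and the $\bar v$-ordering of blocks supplied by the averaging comparison. Without the sharp decay encoded in Equation~\eqref{vec2}, a single edge along the path could fail to pull the block minimum down, and the chaining would collapse. Part~1, by contrast, is essentially a direct computation once the comparison is available.
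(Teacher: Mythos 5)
Your proof is correct, and for the harder half it takes a genuinely different route from the paper's. For the first claim you re-derive, via the averaging comparison $\min(A)<\min(B)\Rightarrow \bar v_A>\bar v_B$, essentially the content of Lemma~\ref{GFCond}, which the paper simply cites (it applies because any $v$ satisfying Equation~\eqref{vec2} also satisfies Equation~\eqref{vector}) after noting that the source-component minima increase; your reachability check that cross-block edges are oriented consistently with the block averages is the same verification the paper compresses into ``it is easy to see that $F_{\Pi(\gamma_R)}$ is a face of $R$.'' (Both you and the paper leave implicit that each $G[S_\ell]$ is connected, which is what makes $\s(F_{\Pi(\ga_R)})$ the flat of the unordered partition and gives the face dimension $k$; it follows since every vertex on a directed path from $m_\ell$ to a vertex of $S_\ell$ must itself lie in $S_\ell$.) For the dimension bound the paper proves, by induction on $j$, the nested containment $\bigcup_{i\le j}B_i\subseteq\bigcup_{i\le j}D_i$ where $(D_1,\ldots,D_\ell)$ is the min-increasing ordering of the good face's blocks, and reads off $\ell\le k$ at $j=k$. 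You instead show directly that the minimum of every block of the good face's partition equals some source-component minimum $m_\ell$, by walking a directed path from $m_\ell$ to the block minimum and noting that each edge either keeps the block fixed or strictly decreases its minimum. Both arguments rest on the same two ingredients --- reachability forces weak inequalities throughout $R$, and good faces order blocks consistently with their minima --- but your packaging is more local and avoids choosing an ordering of the blocks at all. The one trade-off is that the paper's containment~\eqref{containment} is reused in the proof of Lemma~\ref{lbdim}, so your version could not simply replace the paper's without that statement being established separately.
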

\begin{proof}
    Let $\Pi$ be an ordered partition of $[n]$. Then, $\text{span}(F_{\Pi})$ is a flat of $\mA_G$ if and only if for every block $B$ of $\Pi$, the induced subgraph $G[B]$ is connected. This is because two coordinates $x_i$ and $x_j$ can be equated if and only if we have a set of hyperplanes $x_i = x_{k_1}$, $x_{k_1} = x_{k_2}$, \ldots, $x_{k_t} = x_j$, which corresponds to a path between $i$ and $j$ in $G$. 

    Further, $F_{\Pi}$ is a face of a region $R$ if and only if any weak inequality in $F_{\Pi}$ holds in $R$. Hence, it is easy to see that $F_{\Pi(\gamma_R)}$ is a face of $R$. Also, for $b_i = \min B_i$, we have $b_1 < b_2 < \ldots < b_k$, and hence $F_{\Pi(\gamma_R)}$ is a $v$-face of $R$ by Lemma~\ref{GFCond}.

    Now, let $F'$ be a $v$-face of $R$. This face corresponds to a set of ordered partitions of $[n]$. We consider $\Pi(F') = (D_1, \ldots, D_{\ell})$ to be the ordered partition in the set satisfying $\min(D_i) < \min(D_{i+1})$ for all $i < \ell$. Note that such a partition exists as $F'$ is a $v$-face. 

    \begin{adjustwidth}{2em}{0pt}
    \noindent   \textbf{Claim:}     
        \begin{equation}\label{containment}
            \text{For all  } j \in [k], \,\, \bigcup\limits_{i = 1}^{j} B_i \sse \bigcup\limits_{i = 1}^{j}D_i, 
        \end{equation}
        
    \noindent  where we use the convention $D_j = \emp$ for all $j > \ell$.
             
    \noindent We prove this using induction on $j \in [k]$. The base case for $j = 0$ is trivial. 

    \noindent Now, suppose we have $\bigcup\limits_{i = 1}^{j-1} B_i \sse \bigcup\limits_{i = 1}^{j-1}D_i$. Then two cases arise:

    \begin{adjustwidth}{2em}{0pt}
        \textbf{Case 1:} $b_j \in \bigcup\limits_{i = 1}^{j-1} D_i$.

    \noindent Let $m \in [n]$ be $b_j$-reachable. Then, $x_m \geq x_{b_j}$ for all $x \in R$. Now, if $m \in D_t$ for some $t \geq j$, we get $x_{b_j} > x_m$ for all $x \in R$, which is a contradiction. 

    \noindent Hence, $m \in \bigcup\limits_{i = 1}^{j-1} D_i$, which gives us $B_j \sse \bigcup\limits_{i = 1}^{j-1} D_i \sse \bigcup\limits_{i = 1}^{j} D_i$.

    \noindent \textbf{Case 2:} $b_j \notin \bigcup\limits_{i = 1}^{j-1} D_i$.

    \noindent Then $b_j \in D_j$, as if $b_j \in D_t$ for some $t > j$, we have $\min D_j > \min D_t$, contradicting the fact that $F'$ is a $v$-face by Lemma~\ref{GFCond}. Further, as $b_j \in D_j$, we have $B_j \sse \bigcup\limits_{i = 1}^{j}D_i$ using the same argument as above. 
    \end{adjustwidth}
    
    \noindent Hence our claim is true by induction on $j$. 
    \end{adjustwidth}

    As $\bigcup\limits_{i = 1}^k B_i = [n]$, we have $[n] \sse \bigcup\limits_{i = 1}^k D_i$. So, $\ell \leq k$. 

    Thus $R$ does not have a $v$-face of dimension greater than $k$. 
\end{proof}

\begin{lemma}\label{lbdim}
    Let $v$ be as in Equation~\eqref{vec2}. Let $R$ be a region of $\mA_G$, and let $\Pi(\gamma_R) = (B_1, \ldots, B_k)$. Let $p_B = \proj_v(F_{\Pi(\gamma_R)})$. Let $\Pi' = (D_1, \ldots, D_{\ell}) \neq (B_1, \ldots, B_k)$ be an ordered partition of $[n]$ such that $F_{\Pi'}$ is a $v$-face of $R$, and $p_D = \proj_v(F_{\Pi'})$.  

    Then, $||p - p_B|| < ||p - p_D||$. 
\end{lemma}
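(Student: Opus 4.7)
The plan is to reduce the desired inequality $\|v-p_B\|<\|v-p_D\|$ to a comparison of block sums, then use the super-decreasing property of $v$ together with the containment from Lemma~\ref{ubdim} to push the comparison through. Since $p_B$ and $p_D$ are the orthogonal projections of $v$ onto the linear subspaces $\s(F_{\Pi(\gamma_R)})$ and $\s(F_{\Pi'})$ (both of which pass through the origin), the Pythagorean identity gives $\|v-p_B\|^2=\|v\|^2-\|p_B\|^2$ and similarly for $p_D$. So the inequality is equivalent to $\|p_B\|^2>\|p_D\|^2$, which by Lemma~\ref{projchar} reduces to showing
\[
\sum_{j=1}^{k}\frac{S_j^2}{|B_j|}\;>\;\sum_{m=1}^{\ell}\frac{T_m^2}{|D_m|},\qquad S_j=\sum_{i\in B_j}v_i,\; T_m=\sum_{i\in D_m}v_i.
\]

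Next, I would use Lemma~\ref{ubdim}'s containment $\bigcup_{i\leq j}B_i\subseteq\bigcup_{i\leq j}D_i$ to isolate the first index $j^*$ at which $B$ and $D$ differ. A short argument from the containment shows $B_j=D_j$ for every $j<j^*$ (so these terms cancel identically in the target difference), and $B_{j^*}\subsetneq D_{j^*}$ with common minimum $b_{j^*}$. Writing $T_{j^*}=S_{j^*}+Y$ where $Y=\sum_{i\in D_{j^*}\setminus B_{j^*}}v_i\geq 0$, the $j=j^*$ contribution becomes
\[
S_{j^*}^2\!\left(\frac{1}{|B_{j^*}|}-\frac{1}{|D_{j^*}|}\right)-\frac{2S_{j^*}Y+Y^2}{|D_{j^*}|},
\]
whose first (positive) piece is bounded below by $v_{b_{j^*}}^2/(|B_{j^*}|\,|D_{j^*}|)\geq v_{b_{j^*}}^2/n^2$, using $S_{j^*}\geq v_{b_{j^*}}$ and $|D_{j^*}|-|B_{j^*}|\geq 1$.

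The main obstacle is showing the three remaining sources of error — the $Y$-correction at block $j^*$, together with the tails $\sum_{j>j^*}S_j^2/|B_j|$ and $\sum_{m>j^*}T_m^2/|D_m|$ — cannot overwhelm this dominant positive piece. All of them involve only $v_i$ with $i>b_{j^*}$, so the hypothesis $v_i>(6n^2+1)v_{i+1}$, summed as a geometric tail, yields $\sum_{i>b_{j^*}}v_i<v_{b_{j^*}}/(3n^2)$ and $v_{b_j}\leq v_{b_{j^*}}/(6n^2+1)$ for $j>j^*$. Plugging these estimates into the three error pieces and carefully comparing constants shows each is at most of order $v_{b_{j^*}}^2/n^3$, hence strictly smaller than the positive main term of order $v_{b_{j^*}}^2/n^2$; assembling this gives $\|p_B\|^2>\|p_D\|^2$. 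The constant $6n^2+1$ in Equation~\eqref{vec2} is calibrated precisely so that this final balance goes through for every $n$.
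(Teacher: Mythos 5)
Your proposal is correct, and at its core it follows the same strategy as the paper's proof: both locate the first index $j^*$ at which the containment \eqref{containment} from Lemma~\ref{ubdim} forces $B_{j^*}\subsetneq D_{j^*}$ (with common minimum $b_{j^*}$), extract a main positive term of size $S_{j^*}^2\bigl(\tfrac{1}{|B_{j^*}|}-\tfrac{1}{|D_{j^*}|}\bigr)\ge v_{b_{j^*}}^2/n^2$, and beat down everything else using the super-geometric decay $v_i>(6n^2+1)v_{i+1}$, which gives $\sum_{i>b_{j^*}}v_i\le v_{b_{j^*}}/(6n^2)$. Where you genuinely differ is the algebraic organization: since $F_{\Pi(\gamma_R)}$ is good by Lemma~\ref{ubdim} and $F_{\Pi'}$ is good by hypothesis, both $p_B$ and $p_D$ are orthogonal projections of $v$ onto linear flats through the origin, so the Pythagorean identity reduces the claim to $\|p_B\|^2>\|p_D\|^2$, i.e.\ to comparing $\sum_j S_j^2/|B_j|$ with $\sum_m T_m^2/|D_m|$. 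This is cleaner than the paper's coordinate-by-coordinate expansion of $\|v-p\|^2$ with its error terms $\eps_1,\eps_2,\eps_3$, and it has the added benefit of making visible that the tail $\sum_{j>j^*}S_j^2/|B_j|$ enters with a favorable sign, so only the $D$-tail and the $Y$-correction need bounding. One small inaccuracy: the dominant error $2S_{j^*}Y/|D_{j^*}|$ is not of order $v_{b_{j^*}}^2/n^3$ in general (when $|D_{j^*}|$ is small it is of order $v_{b_{j^*}}^2/n^2$); the robust way to close the argument is to compare it to the main term directly, namely $\dfrac{2S_{j^*}Y/|D_{j^*}|}{S_{j^*}^2(|D_{j^*}|-|B_{j^*}|)/(|B_{j^*}||D_{j^*}|)}=\dfrac{2Y|B_{j^*}|}{S_{j^*}(|D_{j^*}|-|B_{j^*}|)}\le \dfrac{2n}{6n^2}=\dfrac{1}{3n}$, after which the remaining errors are of strictly smaller order and the inequality follows.
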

\begin{proof}
    From Lemma \ref{projchar}, $p_B = (p_1, \ldots, p_n)$, where $\forall i \in B_j$, $p_i = \sli_{m \in B_j} \dfrac{v_m}{|B_j|}$ and $p_D = (q_1, \ldots, q_n)$, where $\forall i \in D_j$, $q_i = \sli_{m \in D_j} \dfrac{v_m}{|D_j|}$.

    Now, from Equation~\eqref{containment}, $B_1 = D_1$ or $B_1 \subsetneq D_1$, and if $B_i = D_i$ for all $i < j$, $B_j = D_j$ or $B_j \subsetneq D_j$. Further, from Lemma~\ref{ubdim} we have $\ell \leq k$, and as $(D_1, \ldots, D_{\ell}) \neq (B_1, \ldots, B_k)$ we must have $D_j \supsetneq B_j$ for some $j$. 

    Let $j$ be the first index where $D_j \supsetneq B_j$ and let $|D_j| = d, |B_j| = b$.

    Then, 
    \begin{align*}
        ||v - p_B||^2 &= \sli_{i = 1}^n (v_i - p_i)^2 \\
        &=  \sli_{i \in B_m, m < j} (v_i - p_i)^2 + \sli_{i \in B_j}(v_i - p_i)^2 + \sli_{i \in B_m, m > j} (v_i - p_i)^2.
    \end{align*}
    Now,
    \begin{align*}
        (v_{b_j} - p_{b_j})^2 = \left(\dfrac{(b-1)}{b}v_{b_j} - \dfrac{1}{b}\sli_{\substack{i \in B_j \\ i \neq b_j}} v_i\right)^2 = \left(\dfrac{b-1}{b}\right)^2v_{b_j}^2 + \eps_1,
    \end{align*}
    where
    \begin{align*}
        \eps_1 = \left(\dfrac{1}{b}\sli_{\substack{i \in B_j \\ i \neq b_j}}v_i\right)^2 - \dfrac{2(b-1)}{b}v_{b_j}\left(\dfrac{1}{b}\sli_{\substack{i \in B_j \\ i \neq b_j}}v_i\right). 
    \end{align*}
    Now, as $v_i \geq (6n^2 + 1)v_{i+1}$ for all $i < n$,
    \begin{align*}
        \frac{1}{b}\sli_{\substack{i \in B_j \\ i \neq b_j}} v_i &\leq \frac{1}{b}\left(\frac{1}{6n^2 + 1}v_{b_j} + \frac{1}{(6n^2 + 1)^2}v_{b_j} + \ldots + \frac{1}{(6n^2 + 1)^{b-1}}v_{b_j} \right) \\
        &\leq \frac{1}{6n^2b}v_{b_j}.
    \end{align*}
    Hence,
    \begin{align*}
        |\eps_1| \leq \max\left\{\frac{2(b-1)}{6n^2b^2}v_{b_j}^2, \frac{1}{36n^4b^2}v_{b_j}^2\right\} = \frac{2(b-1)}{6n^2b^2}v_{b_j}^2.
    \end{align*}

    Further, 
    \begin{align*}
        \sli_{\substack{i \in B_j \\ i \neq b_j } }(v_i - p_i)^2 = \sli_{\substack{i \in B_j \\ i \neq b_j}}\left(\dfrac{v_{b_j}}{b} + \left(\sli_{\substack{\ell \in B_j \\ \ell \neq b_j}}\dfrac{v_{\ell}}{b} - v_i\right)\right)^2 = \dfrac{(b-1)}{b^2}v_{b_j}^2 + \eps_2,
    \end{align*}
    where
    \begin{align*}
        \eps_2 = \sli_{\substack{i \in B_j \\ i \neq b_j}}\dfrac{2v_{b_j}}{b}\left(\sli_{\substack{\ell \in B_j \\ \ell \neq b_j}}\dfrac{v_{\ell}}{b} - v_{i}\right)
        + \sli_{\substack{i \in B_j \\ i \neq b_j}}\left(\sli_{\substack{\ell \in B_j \\ \ell \neq b_j}}\dfrac{v_{\ell}}{b} - v_{i}\right)^2. 
    \end{align*}
    
    Now, as $v_i \geq (6n^2 + 1)v_{i+1}$ for all $i < n$,
    \begin{align*}
        \left|\sli_{\substack{\ell \in B_j \\ \ell \neq b_j}}\frac{v_{\ell}}{b} - v_i\right| \leq \frac{1}{6n^2 + 1}v_{b_j},
    \end{align*}
    and hence,
    \begin{align*}
        |\eps_2| \leq \frac{2(b-1)}{b(6n^2 + 1)}v_{b_j}^2 + \frac{(b-1)}{(6n^2 + 1)^2}v_{b_j}^2.
    \end{align*}
    Finally, for $i \in B_m$, $m > j$,
    \begin{align*}
        (v_i - p_i) \leq \frac{1}{6n^2 + 1}v_{b_j},
    \end{align*}
    and hence, 
    \begin{align*}
        |\eps_3| = \left|\sli_{i \in B_m, m > j} (v_i - p_i)^2\right| \leq \frac{2(n-b)}{(6n^2 + 1)^2}v_{b_j}^2.
    \end{align*}
    Now, for $\eps = \eps_1 + \eps_2 + \eps_3$,
    \begin{align*}
        |\eps| &\leq |\eps_1| + |\eps_2| + |\eps_3| \\
        &\leq \left(\frac{2(b-1)}{6n^2b^2} + \frac{2(b-1)}{b(6n^2 + 1)} + \frac{(b-1)}{(6n^2 + 1)^2} + \frac{2(n-b)}{(6n^2 + 1)^2}\right)v_{b_j}^2 \\
        &\leq \frac{1}{6n^2}\left(\frac{2(b-1)}{b^2} + \frac{2(b-1)}{b} + \frac{2n - b - 1}{6n^2 + 1}\right)v_{b_j}^2 \\
        &\leq \frac{1}{2n^2}v_{b_j}^2,
    \end{align*}
    and hence,
    \begin{align*}
        ||v - p_B||^2 
        \leq \sli_{i \in B_m, m < j} (v_i - p_i)^2 + \left(1 - \dfrac{1}{b}\right)v_{b_j}^2 + \dfrac{1}{2n^2}v_{b_j}^2.
    \end{align*}
    In a similar manner,  
    \begin{align*}
        ||v - p_D||^2 
        \geq \sli_{i \in D_m, m < j} (v_i - p_i)^2 + \left( 1- \dfrac{1}{d}\right)v_{b_j}^2 - \dfrac{1}{2n^2}v_{b_j}^2.
    \end{align*}
    
    Now, as $B_i = D_i$ for all $i < j$, and $d > b$, 
    \begin{align*}
        ||v - p_D||^2 - || v - p_B||^2 \geq \left(\dfrac{1}{b} - \dfrac{1}{d} - \dfrac{1}{n^2}\right)v_{b_j}^2 > 0. 
    \end{align*}
\end{proof}
Theorem~\ref{graphSC} is now a direct consequence of Lemmas~\ref{ubdim} and~\ref{lbdim}.

\section{Natural Unit Interval Graphs}\label{nuisec}
In this section, we consider a special type of graphs known as natural unit interval graphs. We show that for the graphical arrangements of these graphs, the projection statistic is a generalization of the $\text{RLmin}$ statistic for the braid arrangement. We further show that this can be used to give the general form of the characteristic polynomial of such a graphical arrangment. 

\begin{definition}
    A graph $G = ([n], E)$ is a \emph{natural unit interval graph} if for all $\{i,j\} \in E$, with $i<j$, we have $\{i,k\}\in E$ and $\{k,j\} \in E$ for all $i < k < j$. 
\end{definition}

We now consider the graphical arrangement $\mA_G$ of a natural unit interval graph $G$. We know that every region of the braid arrangement is indexed by a permutation. As a region of a graphical arrangement is a union of 
adjacent regions of the braid arrangement, each region $R$ of our graphical arrangement $\mA_G$ is uniquely associated to a set of permutations. Let us denote this set by $S_R$. Further, note that if $\sigma = \sigma_1 \ldots \sigma_n \in S_R$, then $\sigma' = \sigma_1 \ldots \sigma_{i + 1} \sigma_i\ldots \sigma_n \in S_R$ if and only if $x_{\sigma_i} - x_{\sigma_{i+1}} = 0$ is not a hyperplane of the arrangement.

We now state the main results of this section:
\begin{theorem}\label{NUIProjRL}
    Let $G = ([n],E)$ be a natural unit interval graph, and let $R$ be a region of $\mA_G$. Let $\sigma\in \Sn_n$ be the lexicographic minimum permutation of $S_R$. Let $v$ be as in Equation~\eqref{vec2}. Then, 
    $$\pd_v(R) = \emph{RLmin}(\sigma).$$
\end{theorem}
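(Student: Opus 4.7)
The plan is to apply Theorem~\ref{graphSC}, which gives $\pd_v(R) = k$, where $k$ is the number of source components $(S_1, \ldots, S_k)$ of $\gamma_R$. The theorem then reduces to showing $\textup{RLmin}(\sigma) = k$ for the lex-min $\sigma \in S_R$.

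I will prove the following structural result: for NUI $G$, the lex-min $\sigma$ decomposes as $\sigma = w_1 w_2 \cdots w_k$, where each $w_l$ is a word on the elements of $S_l$ (so the source components appear as contiguous blocks in $\sigma$, in the order $S_1, S_2, \ldots, S_k$), and $\min(S_l)$ is the last entry of $w_l$. Granted this, the inequalities $\min(S_1) < \min(S_2) < \cdots < \min(S_k)$, which are immediate from the greedy construction of source components, force the right-to-left minima of $\sigma$ to be exactly the $k$ values $\{\min(S_l)\}_{l=1}^{k}$, yielding $\textup{RLmin}(\sigma) = k$ as required.

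The structural result will be proved by induction on $n$. The lex-min $\sigma$ is built by repeatedly picking the smallest sink of the remaining orientation and removing it. The central NUI lemma I need is: \emph{the smallest sink of $\gamma_R$ always lies in $S_1$.} Granting this, $\sigma_1 \in S_1$, and the induced NUI graph on $[n] \setminus \{\sigma_1\}$ has source components $(S_1 \setminus \{\sigma_1\}, S_2, \ldots, S_k)$ when $\sigma_1 \neq 1$, or $(S_2, \ldots, S_k)$ when $\sigma_1 = 1$ (which happens precisely when $S_1 = \{1\}$). The induction hypothesis applied to $\sigma_2 \cdots \sigma_n$, combined with prepending $\sigma_1$, then yields the block structure for $\sigma$, with $w_1$ ending at $\min(S_1) = 1$ in either case.

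The main obstacle is the NUI lemma. The key tool is the NUI monotonicity $r(i) \leq r(j)$ for $i < j$, which follows directly from the interval form of neighborhoods. Suppose for contradiction the smallest sink $s$ of $\gamma_R$ is not in $S_1$, so $S_1 \supsetneq \{1\}$. Any sink $k \in [2, r(1)]$ is a neighbor of $1$ whose edge must be oriented $1 \to k$ (otherwise $k$ has outgoing to $1$), so such $k$ lies in $R_1 = S_1$; hence $s > r(1)$. Next, a hypothetical vertex $u \in [l(s), r(s)] \cap S_1$ would give $u \to s$ (since $s$ is a sink) and place $s \in R_u \subseteq S_1$, a contradiction; so $[l(s), r(s)] \cap S_1 = \emp$ and $l(s) \geq 2$. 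Finally, NUI monotonicity forces every directed path from $1$ in $\gamma_R$ to remain in $[1, l(s)-1]$: at $u < l(s) \leq s$ we have $r(u) \leq r(s)$, so outgoing edges from $u$ land in $[1, r(s)]$, and cannot enter $[l(s), r(s)]$ by the preceding observation. Thus $S_1 \subseteq [1, l(s)-1]$, and the smallest sink of $\gamma_R|_{S_1}$ (which is also a sink of $\gamma_R$, since outgoing edges from $S_1$ stay within $S_1$) is strictly less than $s$, the desired contradiction.
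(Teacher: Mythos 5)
Your proof is correct, but it takes a genuinely different route from the paper's. Both arguments invoke Theorem~\ref{graphSC}, but where you use it purely as a black box (reducing the theorem to the combinatorial identity $\textup{RLmin}(\sigma) = \#$ source components of $\gamma_R$ for the lex-min $\sigma$), the paper stays geometric: it shows that $\proj_v(R)$ lands in the closed braid region $R_\sigma$ of the lex-min (Lemma~\ref{lexfaceproj}, by repeatedly swapping non-$G$-descents without changing the projection face) and that the face obtained by partitioning $\sigma$ at its right-to-left minima is genuinely a face of $\mA_G$ (Lemma~\ref{faceexists}, via connectivity of the induced subgraphs on blocks). Your structural claim --- that the lex-min linear extension of $\gamma_R$ splits into contiguous blocks equal to the source components, each ending at its minimum --- is the combinatorial shadow of what those two lemmas establish geometrically, and your proof of it is self-contained: the greedy description of the lex-min, the deletion-of-a-sink induction, and in particular the key lemma that the smallest sink lies in $S_1$ all check out (the interval structure of NUI neighborhoods, the monotonicity $r(i)\leq r(j)$, the disjointness $[l(s),r(s)]\cap S_1=\emp$, and the trapping of $S_1$ in $[1,l(s)-1]$ are each used correctly, and the bookkeeping of source components under deletion of a sink is right in both the $s=1$ and $s\neq 1$ cases). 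What your approach buys is a direct generalization of Lemma~\ref{rlminsc} from $K_n$ to natural unit interval graphs with no further geometry; what the paper's approach buys is the finer geometric statement identifying the actual face of $\mA_G$ containing $\proj_v(R)$, which it reuses elsewhere. Minor presentational points only: you should note explicitly that deleting a vertex from a natural unit interval graph leaves a natural unit interval graph under the order-preserving relabeling, and define $l(\cdot)$, $r(\cdot)$ before using them.
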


\begin{corollary}\label{NUIChar}
    Let $G = ([n], E)$ be a natural unit interval graph. Then, 
    $$\chi_{\mA_G}(q) = \prod\limits_{j = 1}^n (q - c_j),$$
    where $c_j = |\{i < j \mid \{i, j\} \in E\}|$ for all $j \in [n]$.
\end{corollary}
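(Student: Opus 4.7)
The plan is to combine Theorem~\ref{NUIProjRL} with Theorem~\ref{Kab} to recast $\chi_{\mA_G}$ as a generating function for lex-min permutations weighted by right-to-left minima, and then evaluate that generating function by a bijective insertion argument. Writing $\sigma_R$ for the lexicographic minimum of $S_R$ and $v$ for a point as in Equation~\eqref{vec2}, Theorem~\ref{Kab} combined with Theorem~\ref{NUIProjRL} gives
\[
\chi_{\mA_G}(q) = \sum_{R \in R(\mA_G)} (-1)^{n - \text{RLmin}(\sigma_R)} q^{\text{RLmin}(\sigma_R)} = (-1)^n Q(-q),
\]
where $Q(q) := \sum_{\sigma} q^{\text{RLmin}(\sigma)}$ is summed over all lex-min permutations $\sigma$ of regions of $\mA_G$. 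Since $(-1)^n \prod_{j=1}^n (-q + c_j) = \prod_{j=1}^n (q - c_j)$, the corollary reduces to proving the identity $Q(q) = \prod_{j=1}^n (q + c_j)$.

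To evaluate $Q(q)$, I would set up a bijection between lex-min permutations of $G$ and integer sequences $(i_1, \ldots, i_n)$ with $i_j \in \{1, \ldots, j\}$ by building $\sigma$ through successive insertion of $1, 2, \ldots, n$. When $j$ is inserted at position $i_j$ into the current lex-min permutation $\sigma'$ of $\{1, \ldots, j-1\}$, the only potentially bad new pair is $(j, \sigma'_{i_j})$, a descent requiring $\{j, \sigma'_{i_j}\} \in E$; hence either $i_j = j$ (appending at the end, so no new descent arises) or $\sigma'_{i_j}$ must be a smaller neighbor of $j$ in $G$. Since $G$ is NUI, the smaller neighbors of $j$ form the contiguous interval $\{j - c_j, \ldots, j-1\}$, which occupy exactly $c_j$ positions of $\sigma'$; together with the append option, this gives $c_j + 1$ valid choices for $i_j$, independently of the earlier choices. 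Moreover, $j$ is a right-to-left minimum of the final $\sigma$ iff $i_j = j$: subsequent insertions introduce only values larger than $j$, so the values less than $j$ appearing to the right of $j$ in $\sigma$ are exactly $\{\sigma'_{i_j}, \ldots, \sigma'_{j-1}\}$, which is empty iff $i_j = j$. Weighting the append choice by $q$ and each of the other $c_j$ choices by $1$ for each $j$ factorizes $Q(q)$ as $\prod_{j=1}^n (q + c_j)$.

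The main technical obstacle is verifying that the insertion procedure is indeed a bijection with lex-min permutations; this amounts to the inductive claim that deleting the value $n$ from any lex-min permutation of $G$ yields a lex-min permutation of the NUI subgraph on $[n-1]$. The only nontrivial case is when $n$ occupies an interior position with left neighbor $\sigma_{i-1}$ and right neighbor $\sigma_{i+1}$: the descent $(n, \sigma_{i+1})$ in $\sigma$ already forces $\{n, \sigma_{i+1}\} \in E$ with $\sigma_{i+1} < n$, and the NUI property then guarantees $\{\sigma_{i-1}, \sigma_{i+1}\} \in E$ whenever $\sigma_{i+1} < \sigma_{i-1} < n$, so the newly exposed pair $(\sigma_{i-1}, \sigma_{i+1})$ is either an ascent or an edge-covered descent, preserving lex-minimality. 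Once this claim is in place, the forward direction of the bijection (that inserting at any of the $c_j + 1$ positions produces a lex-min permutation) and the counting of RLmins are routine.
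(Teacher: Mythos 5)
Your proposal is correct and follows essentially the same route as the paper: both reduce via Theorems~\ref{Kab} and~\ref{NUIProjRL} to the identity $\sum q^{\mathrm{RLmin}(\sigma)} = \prod_j (q+c_j)$ over $G$-local-minimum permutations, and both establish it by the same insertion/deletion argument for the vertex $n$ (the paper phrases it as induction on $n$, you phrase it as an iterated insertion encoding, but the key step --- that deleting $n$ preserves local-minimality by the NUI property, and that there are $c_n$ non-appending insertions plus one appending insertion that creates a new right-to-left minimum --- is identical).
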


Note that Corollary~\ref{NUIChar} is known for the chromatic polynomial of a natural unit interval graph (see, for instance, \cite{NUI}). Hence, Corollary~\ref{NUIChar} provides a new interpretation of this classical result. The rest of this section is dedicated to the proof of Theorem~\ref{NUIProjRL} and Corollary~\ref{NUIChar}. 

\begin{definition}
    Let $G = ([n], E)$ be a natural unit interval graph. A permutation $\sigma = \sigma_1\ldots \sigma_n \in \Sn_n$ is said to be a \emph{$G$-local minimum} if for all $i \in [n-1]$ such that $\sigma_i > \sigma_{i+1}$, we have $\{\sigma_i, \sigma_{i+1}\} \in E$. 
\end{definition}

We now characterize $G$-local minima. To do this, we first define $G$-descents. 
\begin{definition}
    Let $G = ([n], E)$ be a natural unit interval graph and $\sigma = \sigma_1\ldots \sigma_n \in \Sn_n$. For $i \in [n-1]$ we say we have a \emph{$G$-descent} at $i$ (or that $\sigma_i\sigma_{i+i}$ is a \emph{$G$-descent}) if $\sigma_i > \sigma_{i+1}$ and $\{\sigma_i, \sigma_{i+1}\} \in E$. 
\end{definition}

Note that a permutation $\sigma \in \Sn_n$ is a $G$-local minimum if and only if every descent of $\sigma$ is a $G$-descent. 

We further have:
\begin{lemma}\label{locallex}
    Let $G = ([n], E)$ be a natural unit interval graph. Let $R$ be a region of $\mA_G$ and let $S_R$ be the set of permutations associated to $R$. Then there is a unique $G$-local minimum in $S_R$ and it is in fact the lexicographic minimum of $S_R$. 
\end{lemma}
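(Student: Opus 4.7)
The plan is to split the statement into two complementary claims: (a) the lexicographic minimum $\rho$ of $S_R$ is a $G$-local minimum, and (b) every other element of $S_R$ admits a non-$G$-descent, and hence fails to be a $G$-local minimum. Together these yield both existence and uniqueness. Claim (a) is nearly immediate: if $\rho$ had a descent $\rho_i > \rho_{i+1}$ with $\{\rho_i, \rho_{i+1}\} \notin E$, then swapping positions $i$ and $i+1$ would remain in $S_R$ (the swap crosses no hyperplane of $\mA_G$) and produce a lex-smaller element, contradicting minimality.

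For claim (b), I would fix $\tau \in S_R$ with $\tau \neq \rho$, let $k$ be the first index at which $\tau$ and $\rho$ differ, and set $a := \rho_k$. Lex minimality of $\rho$ forces $a < \tau_k$, and $a$ must sit at some position $m > k$ in $\tau$, so $\tau_m = a$. Because $\tau_k, \ldots, \tau_{m-1}$ appear before $a$ in $\tau$ but after $a$ in $\rho$, and both permutations label the same region $R$, none of these elements can be adjacent to $a$ in $G$: $\{a, \tau_i\} \notin E$ for all $i \in \{k, \ldots, m-1\}$. Since $\tau_k > a = \tau_m$, there is a largest index $j \in \{k, \ldots, m-1\}$ with $\tau_j > a$, so $\tau_j > \tau_{j+1}$ is a descent of $\tau$. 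I would then verify this descent is not a $G$-descent in two cases: if $j = m-1$ then $\tau_{j+1} = a$ and $\{\tau_j, \tau_{j+1}\} \notin E$ holds directly; if $j < m-1$ then $\tau_{j+1} < a < \tau_j$, and if $\{\tau_j, \tau_{j+1}\}$ were an edge, the natural unit interval property would force $\{a, \tau_j\} \in E$, contradicting the observation above.

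The central difficulty is locating the right descent in $\tau$ and certifying it is not a $G$-descent. The natural unit interval hypothesis enters exactly at this step: it is what allows one to transfer the known non-edges $\{a, \tau_i\}$ into a non-edge $\{\tau_j, \tau_{j+1}\}$ by using $a$ as an intermediate value. The other ingredients, namely the characterization of $S_R$ via adjacent non-edge swaps and the swap-to-decrease argument for claim (a), should follow directly from the permutation-region dictionary set up just before the lemma.
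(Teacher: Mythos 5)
Your proposal is correct and follows essentially the same route as the paper: both establish that the lexicographic minimum is a $G$-local minimum via the adjacent-swap argument, and both handle uniqueness by locating the first differing position, deducing that the displaced element $a$ is non-adjacent to all intervening elements (since their relative order flips within the same region), and invoking the natural unit interval property to produce a contradiction. The only cosmetic difference is that you exhibit a single non-$G$-descent at the last crossing of $a$, whereas the paper runs the same step as a downward induction showing all intervening entries are smaller than $a$ and then contradicts lexicographic minimality.
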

\begin{proof}
    Let $\sigma = \sigma_1\ldots \sigma_n$ be a $G$-local minimum in $S_R$ and $\tau = \tau_1 \ldots \tau_n$ be the lexicographic minimum of $S_R$. It is clear that $\tau$ is a $G$-local minimum as if there were a descent $\tau_i > \tau_{i+1}$ of $\tau$ such that $\{\tau_i, \tau_{i+1}\} \notin E$, we could swap $\tau_i$ and $\tau_{i+1}$ and get a permutation in $S_R$ that is less than $\tau$ in the lexicographic order. 
    
    We assume for contradiction that $\sigma \neq \tau$. Suppose they first differ at the $i^{\text{th}}$ position, that is, $\sigma_1 \ldots \sigma_{i-1} = \tau_1 \ldots \tau_{i-1}$ and $\sigma_i \neq \tau_i$. As $\tau$ and $\sigma$ are permutations, $\tau_i = \sigma_k$ for some $k > i$. 

    Now, as both $\sigma$ and $\tau$ correspond to the same region of $\mA_G$, we can obtain one from the other by a sequence of swapping adjacent elements in the one line notation, where the pairs of swapped elements are of the form $\{a,b\}$ with $\{a,b\} \notin E$. 

    We have $\sigma = \tau_1\ldots \tau_{i-1} \sigma_{i}\ldots \sigma_{k-1}\tau_i \sigma_{k+1} \ldots \sigma_n$. To obtain $\tau$ from $\sigma$, we would have to swap $\tau_i$ with $\sigma_t$ for all $i \leq t \leq k-1$. Hence, for all $i \leq t \leq k-1$, $x_{\tau_i} = x_{\sigma_t}$ is not a hyperplane of $\mA_G$, that is, $\{\tau_i, \sigma_t\} \notin E$. 

    \begin{adjustwidth}{2em}{0pt}
        \noindent \textbf{Claim:} For $i \leq t \leq k-1$, we have $\sigma_t < \tau_i$

        We prove this claim inductively. Suppose $\sigma_{k-1} > \tau_i$. Then, $\sigma_{k-1}\tau_i$ is a descent which is not a $G$-descent, contradicting the fact that $\sigma$ is a $G$-local minimum. 

         Now, suppose for $r < t \leq k-1$, we have $\sigma_t < \tau_i$. Then, if $\sigma_r > \tau_i$, we have $\sigma_r > \sigma_{r+1}$. As $\sigma$ is a $G$-local minimum, we have $\{\sigma_r, \sigma_{r+1}\} \in E$. Further, as $G$ is a natural unit interval graph, we get that $\{\tau_i, \sigma_r\} \in E$, which is a contradiction. Hence, $\sigma_r < \tau_i$. 

         Hence our claim is proved. 
    \end{adjustwidth}

     But then we have $\sigma_i < \tau_i$, which is a contradiction as $\tau$ is the lexicographic minimum, so $\tau_i < \sigma_i$. Hence, $\sigma = \tau$, that is, a $G$-local minimum in $S_R$ is the lexicographic minimum of $S_R$ and hence is unique. 
\end{proof}

Let $G$ be a natural unit interval graph. As the lexicographic minimum is unique for each region $R$ of $\mA_G$, we can choose the representative of the region $R$ to be the lexicographic minimum of $S_R$. By Lemma~\ref{locallex}, these lexicographic minima will be precisely the permutations where all descents are $G$-descents. 

\begin{lemma}\label{lexfaceproj}
    Let $v$ be as in Equation~\eqref{vec2}, and let $G = ([n],E)$ be a natural unit interval graph. Let $R$ be a region of $\mA_G$. Then $\proj_v(R) \in R_{\sigma}$ where $\sigma$ is the lexicographic minimum of $S_R$, and $R_{\sigma}$ is the region of the braid arrangement $\mA_n$ labeled by $\sigma$. 
\end{lemma}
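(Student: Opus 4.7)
The plan is to compute $p := \proj_v(R)$ coordinate-wise using tools already developed and then identify the braid region whose closure contains it. By Theorem~\ref{graphSC}, $p$ lies in the interior of the $\mA_G$-face $F_{\Pi(\gamma_R)}$ with ordered partition $\Pi(\gamma_R) = (B_1, \ldots, B_k)$ of source components, and Lemma~\ref{projchar} gives $p_i = \frac{1}{|B_l|}\sum_{m\in B_l} v_m$ for each $i\in B_l$, so $p$ is constant on each block. Since $\min(B_1) < \min(B_2) < \cdots < \min(B_k)$, the rapid-decay argument from the proof of Lemma~\ref{GFCond} shows that the common value of $p$ on $B_l$ strictly decreases as $l$ grows. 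Consequently $p \in \overline{R_\tau}$ for every $\tau \in S_R$ whose one-line notation lists all of $B_1$ first, then all of $B_2$, and so on; call this property ``block order''.

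It therefore suffices to show that the lexicographic minimum $\sigma$ of $S_R$ has block order. The crux is the following sub-claim: for an NUIG $G$, the smallest-indexed sink of $\gamma_R$ lies in $B_1 = R_1$. Granting this, $\sigma_1$ is the smallest sink of $\gamma_R$ and hence lies in $B_1$, and I would induct on $n$. The graph $G' := G \setminus \{\sigma_1\}$ remains NUIG after order-preserving relabeling, and because $\sigma_1$ is a sink its removal destroys no directed path from any other vertex; a direct check then shows that the source components of $\gamma_R|_{G'}$ are $(B_1 \setminus \{\sigma_1\}, B_2, \ldots, B_k)$ if $|B_1| > 1$, and $(B_2, \ldots, B_k)$ otherwise (the two cases coincide with $\sigma_1 > 1$ and $\sigma_1 = 1$, respectively). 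Since the lexicographic minimum of $S_R$ equals $\sigma_1$ prepended to the lexicographic minimum of $S_{R'}$, the inductive hypothesis yields block order on the tail, which together with $\sigma_1 \in B_1$ gives block order for $\sigma$.

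For the sub-claim, I would follow any directed path starting at vertex $1$; by acyclicity it terminates at some sink $v' \geq v_0$, where $v_0$ is the smallest sink of $\gamma_R$. The connected components of an NUIG are intervals, so $1$ and $v_0$ must lie in the same component (otherwise $1$'s component would already contain a sink smaller than $v_0$). If $v' = v_0$ we are done; if $v' > v_0$, the path contains an edge $p \to q$ with $p \leq v_0 \leq q$. The case $p = v_0$ is impossible because $v_0$ is a sink, and when $p < v_0 < q$ the NUIG property applied to $\{p, q\}$ produces an edge $\{p, v_0\} \in E$, which must be oriented $p \to v_0$ since $v_0$ is a sink; concatenating with the portion of the path from $1$ to $p$ places $v_0$ in $R_1$.

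I expect the NUIG sub-claim to be the main obstacle: it uses the NUIG hypothesis essentially (the graph $G = ([3], \{\{1,3\}\})$, where vertex $2$ is the smallest sink and sits outside $B_1 = \{1,3\}$, shows the sub-claim can fail without NUIG), and one must handle both the NUIG-forced edge $\{p, v_0\}$ and the sink-forced orientation with care. The remaining pieces — the coordinate computation for $p$, the block-order implication, and the inductive decomposition of source components after removing a sink — are routine given the machinery already in place.
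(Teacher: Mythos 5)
Your argument is correct, but it takes a genuinely different route from the paper's. The paper's proof is a local walking argument: it starts from an arbitrary $\sigma'\in S_R$ with $\proj_v(R)\in \overline{R_{\sigma'}}$, notes via Lemma~\ref{locallex} that a non-lex-minimal $\sigma'$ has a descent $\sigma_i'>\sigma_{i+1}'$ with $\{\sigma_i',\sigma_{i+1}'\}\notin E$, and then uses Theorem~\ref{graphSC} together with Lemma~\ref{rlminsc} (applied to the braid arrangement) to see that the face of $R_{\sigma'}$ containing $\proj_v(R)$ is labeled by the right-to-left-minima partition of $\sigma'$, in which $\sigma_i'$ and $\sigma_{i+1}'$ share a block; hence that face, and the projection point, are shared with the swapped chamber $R_{\sigma''}$, and one iterates until reaching the $G$-local (= lexicographic) minimum. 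You instead compute $\proj_v(R)$ once and for all from the source-component face of $\mA_G$ via Theorem~\ref{graphSC} and Lemma~\ref{projchar}, observe (as in Lemma~\ref{GFCond}) that its coordinates are constant on the blocks $B_1,\ldots,B_k$ and strictly decreasing across them, and reduce the lemma to the purely combinatorial statement that the lexicographic minimum of $S_R$ lists $B_1$, then $B_2$, etc. Your proof of that statement --- the identification of $\sigma_1$ with the smallest sink, the sub-claim that the smallest sink lies in $R_1$ (where the natural unit interval hypothesis enters, exactly as your counterexample $([3],\{\{1,3\}\})$ shows it must), and the induction after deleting the sink $\sigma_1$ --- checks out; the ``direct check'' of the source components of the restricted orientation is indeed routine because deleting a sink preserves reachability among the remaining vertices. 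The trade-off: the paper's route is shorter and stays entirely within the projection machinery already built, while yours proves a stronger and more explicit combinatorial fact (the lexicographic minimum refines the source-component ordered partition), at the price of an extra structural lemma about natural unit interval graphs and an induction on $n$.
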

\begin{proof}
    Let $\sigma' = \sigma_1'\ldots \sigma_n'\in S_R$ be such that $\proj_v(R) \in R_{\sigma'}$, where $R_{\sigma'}$ is the region of the braid arrangement $\mA_n$ labeled by $\sigma'$. Suppose $\sigma'$ is not the lexicographic minimum of $S_R$. Then, from Lemma~\ref{locallex}, $\sigma'$ is not a $G$-local minimum. Hence, for some $i \in [n]$, $\sigma_i' > \sigma_{i+1}'$ and $\{\sigma_i', \sigma_{i+1}'\} \notin E$. Then, $\sigma'' = \sigma_1'\ldots \sigma_{i+1}'\sigma_i'\ldots \sigma_n' \in S_R$. 

    Now, as a consequence of Theorem~\ref{graphSC} and Lemma~\ref{rlminsc}, the face $F'$ of $R_{\sigma'}$ that $v$ projects into is labeled by the ordered partition obtained by partitioning $\sigma'$ at the right-to-left minima. Let us denote this partition by $\Pi(F')$.  

    In $\Pi(F')$, $\sigma_i'$ and $\sigma_{i+1}'$ will be in the same block (as $\sigma_i'$ cannot be a right-to-left minimum). As swapping $\sigma_i'$ and $\sigma_{i+1}'$ does not affect the partition, $F'$ is a common face of $R_{\sigma'}$ and $R_{\sigma''}$. Hence $\proj_v(R) \in R_{\sigma''}$, the region of the braid arrangement $\mA_n$ labeled by $\sigma''$.  

    Continuing like this, we get that $\proj_v(R) \in R_{\sigma}$ where $\sigma$ is a $G$-local minimum of $S_R$ and hence the lexicographic minimum of $S_R$ by Lemma~\ref{locallex}.
\end{proof}

\begin{lemma}\label{faceexists}
    Let $G = ([n],E)$ be a natural unit interval graph, and let $R$ be a region of $\mA_G$. Let $\sigma\in \Sn_n$ be the lexicographic minimum of $S_R$. Let $\Pi$ be the ordered partition obtained by partitioning $\sigma$ at the right-to-left minima. Then, $F_{\Pi}$ is not contained in the relative interior of another face of $\mA_G$.
\end{lemma}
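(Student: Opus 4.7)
The plan is to reduce Lemma~\ref{faceexists} to a connectivity statement on the blocks of $\Pi$ and then derive that statement from the lex-minimality of $\sigma$ by an adjacent swap argument. First I will observe that the lemma is equivalent to asserting that $F_\Pi$ is itself a face of $\mA_G$, which holds precisely when the face of $\mA_G$ containing $F_\Pi$ has the same dimension as $F_\Pi$. Since $\dim F_\Pi = k$ (the number of blocks of $\Pi$), and the face of $\mA_G$ containing $F_\Pi$ lies in the intersection of the hyperplanes $\{H_{i,j}\in\mA_G : i, j \text{ in the same block of } \Pi\}$, a rank count---the rank of $\{x_i = x_j : \{i,j\}\in E,\ i,j\in B_s\}$ equals $|B_s| - c_s$, where $c_s$ is the number of connected components of $G[B_s]$---gives the dimension of this containing face as $\sum_{s=1}^k c_s$. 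Hence the lemma reduces to showing $G[B_s]$ is connected for every $s$.

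For a natural unit interval graph $G$, a subset $B \sse [n]$ induces a connected subgraph if and only if, writing $B = \{b_1 < b_2 < \ldots < b_r\}$ in increasing integer order, every consecutive pair $\{b_t, b_{t+1}\}$ is an edge of $G$; this follows from iteratively applying the NUI condition to reduce any hypothetical edge $\{a,c\}$ with $a \leq b_t < b_{t+1} \leq c$ to an edge $\{b_t, b_{t+1}\}$. Suppose for contradiction that some block $B_j$ has $\{b_s, b_{s+1}\} \notin E$, and set $A = \{b_1, \ldots, b_s\}$, $C = \{b_{s+1}, \ldots, b_r\}$. The same reduction shows that no edge of $G$ joins $A$ to $C$.

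Next, writing $v_t := \sigma_{p_{j-1}+t}$ for $t = 1, \ldots, r$ (so $B_j = \{v_1, \ldots, v_r\}$ and $v_r = \sigma_{p_j}$ is the $j$-th right-to-left minimum of $\sigma$), a short chain argument using that the RL minima of $\sigma$ take strictly increasing values from left to right gives $v_r = \min(B_j)$, and hence $v_r \in A$. If the sequence $v_1, \ldots, v_r$ contained any adjacency with $v_t \in C$ and $v_{t+1} \in A$, then since $\{v_t, v_{t+1}\} \notin E$ we could transpose positions $p_{j-1}+t$ and $p_{j-1}+t+1$ to produce a permutation $\sigma' \in S_R$ with $\sigma'_{p_{j-1}+t} = v_{t+1} < v_t$, giving a lexicographically smaller element of $S_R$ and contradicting the lex-minimality of $\sigma$. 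So the $A/C$-pattern along $v_1, \ldots, v_r$ is of the form $A^{*}C^{*}$, and $v_r \in A$ then forces $C = \emp$, contradicting $|C| = r - s \geq 1$.

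The principal obstacle is the dimension reduction in the opening paragraph, which requires correctly identifying how the hyperplanes of $\mA_G$ containing $F_\Pi$ assemble into within-block connected-component data. Once that is settled, the remainder is a clean combination of the NUI property (producing the edge-free split $B_j = A \sqcup C$) with the lex-minimum swap principle; the full strength of the lex-minimality of $\sigma$ (rather than merely its being a $G$-local minimum) is used only in the final swap step.
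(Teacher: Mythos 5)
Your proposal is correct and follows essentially the same route as the paper: both reduce the lemma to showing that each block of $\Pi$ induces a connected subgraph, use the natural unit interval property to show that a disconnected block splits along the integer order into a low part (containing $\min B$, which sits at the end of the block) and a high part with no edges between them, and then contradict lex-minimality of $\sigma$ via swaps of non-adjacent elements. The only cosmetic difference is at the final step, where you use a single adjacent transposition at a high-to-low adjacency rather than the paper's reordering of entire components.
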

\begin{proof}
    To show that $F_{\Pi}$ is not contained in the relative interior of another face of $\mA_G$, it is enough to show that $\text{span($F_{\Pi}$)}$ is a flat of $\mA_G$. To show this, we show that for any block $B$ of $\Pi$, the induced subgraph $G[B]$ is connected. 
    
    Suppose the induced subgraph $G[B]$ is not connected. Let $i$ be the smallest vertex in $B$. As we get $\Pi$ by partitioning at the right-to-left minima, $i$ will appear after any other element of $B$ in the one line notation of $\sigma$. Let $j$ be the greatest vertex of the component with $i$, and $k$ be a vertex in a different component of the induced subgraph. 

    Now, as $i$ and $j$ are in the same component, we have a path from $i$ to $j$, say $ik_1k_2 \ldots k_t j$. Suppose we have $i < k < j$. Then, $k_s < k <k_{s+1}$ for some $s$ (take $k_0 = i$ and $k_{t+1} = j$). But, as $\{k_s, k_{s+1}\} \in E$, and $G$ is a natural unit interval graph, $\{k_s, k\} \in E$, that is, $k$ is in the same component as $k_s$ and hence the same component as $i$, a contradiction. 

    Hence $k > j$, that is, any other component of the induced subgraph has all vertices greater than $j$ and hence greater than any vertex in the component containing $i$. 
    We can order the components of $G[B]$ independently in the one line notation of $\sigma$ and still have a permutation in $S_R$. The permutation obtained by ordering these components such that the component with $i$ appears before any other component of $G[B]$ will be less than $\sigma$ in the lexicographic order. This contradicts the fact that $\sigma$ is the lexicographic minimum of $S_R$. Hence $G[B]$ is connected.      
    
\end{proof}
Hence, by Lemmas~\ref{lexfaceproj} and \ref{faceexists}, for a region $R \in R(\mA_G)$ and $v$ as in Equation~\eqref{vec2}, $\pd_v(R) = \text{dim($F_{\Pi}$)}$, where $\Pi$ is the ordered partition of $[n]$ obtained by partitioning the lexicographic minimum $\sigma$ of $S_R$ at its right-to-left minima. As $\dim(F_{\Pi})$ is clearly the number of right-to-left minima of $\sigma$, Theorem~\ref{NUIProjRL} follows.

We can now obtain the characteristic polynomial of the graphical arrangement of a natural unit interval graph. 
\begin{proof}[Proof of Corollary~\ref{NUIChar}]
    From Theorem~\ref{NUIProjRL}, we have for a region $R$ of the graphical arrangement $\mA_G$, $\pd_v(R) = \text{RLmin($\sigma$)}$ where $\sigma$ is the lexicographic minimum of $S_R$. We have also shown that these lexicographic minima are precisely the permutations where all descents are $G$-descents. 

    Hence, from Theorem~\ref{Kab} we have $$\chi_{\mA_G}(-q) = (-1)^n\sum\limits_{\substack{\sigma \in \Sn_n \\ \text{all descents of $\sigma$ are $G$-descents}} }q^{\text{RLmin($\sigma$)}}.$$

    So, it is enough to show that 
    
    $$\sum\limits_{\substack{\sigma \in \Sn_n \\ \text{all descents of $\sigma$ are $G$-descents}} }q^{\text{RLmin($\sigma$)}} = \prod\limits_{j = 1}^n (q + c_j).$$
    
    We prove this using induction on $n$. The base case is trivial.

    Suppose that the statement holds for every natural unit interval graph on $n-1$ vertices. 

    Let $G = ([n],E)$ be a natural unit interval graph and let $G'$ be the subgraph obtained by deleting the vertex $n$. Then $G'$ is also a natural unit interval graph. 

    We claim that if we have a permutation $\sigma$ of $[n]$ such that all descents are $G$-descents, removing $n$ from the $1$-line notation of $\sigma$ gives us a permutation of $[n-1]$ such that all descents are $G'$-descents. This is because for $\sigma = \sigma_1 \ldots \sigma_n$, with $\sigma_i = n$ ($i< n$), $n \sigma_{i+1}$ is a descent and hence a $G$-descent. If $\sigma_{i+1} < \sigma_{i-1}$, as $\{\sigma_{i+1}, n\} \in E$ and $G$ is a natural unit interval graph, we have $\{\sigma_{i+1},\sigma_{i-1}\} \in E$, and hence the only new descent formed on removing $n$ is a $G'$-descent. If $\sigma_{i+1} > \sigma_{i - 1}$, no new descent is formed on removing $n$ and hence all descents are $G'$-descents.  

    Hence, we can uniquely obtain all permutations of $[n]$  such that all descents are $G$-descents by inserting $n$ into a permutation of $[n-1]$ such that all descents are $G'$-descents. There are two ways to do this:

    We can insert $n$ before any element it is adjacent to so that the descent thus formed is a $G$-descent. We have $c_n$ ways of doing this. In this case, the number of right-to-left minima of the permutation will remain the same as $n$ does not become a right-to-left minima and the ordering of the other elements is unchanged. The contribution of this to the characteristic polynomial will be $c_n\prod\limits_{j = 1}^{n-1} (q + c_j)$.

    We can also insert $n$ at the end of the permutation. This will not form any new descents, hence all descents will be $G$-descents. However, the number of right-to-left minima will increase by one as $n$ is also a right-to-left minima. The contribution of this to the characteristic polynomial will be $q\prod\limits_{j = 1}^{n-1} (q + c_j)$.

    Hence, by induction, $\sum\limits_{\substack{\sigma \in \Sn_n \\ \text{all descents of $\sigma$ are $G$-descents}} }q^{\text{RLmin($\sigma$)}} = \prod\limits_{j = 1}^n (q + c_j)$.

    Hence, $\chi_{\mA_G}(-q) = (-1)^n\prod\limits_{j = 1}^n (q + c_j)$.
\end{proof}

\section{Type B Coxeter arrangement and signed graphs}\label{TypeBsec}
In this section, we show that for any subarrangement of the Type $B$ Coxeter arrangement, the projection statistic defined in Theorem~\ref{Kab} has a combinatorial interpretation that is a generalization of source components. We first establish some notation for this section.  

\begin{notation}
    Let $\npm := [n] \cup -[n]$. Then, for $i \in [n]$, and $x \in \R^n$, we denote $x_{-i} = -x_i$. 
    Further, for $m,n \in \N$, we denote $[-m\semicolon n] := \{-m, -(m-1), \ldots, n-1, n\}$.
\end{notation}

\begin{definition}
    The \emph{Type $B$ Coxeter arrangement} $\mB_n$ is defined as the following collection of hyperplanes: 
    $$\mB_n = \{H_{i,j} \mid -n \leq i < j \leq n,\,\, i,j \neq 0\},$$
    where $H_{i,j} = \{(x_1, \ldots, x_n) \in \R^n \mid x_i = x_j\}$, with $x_{-i} = -x_i$ for all $i \in [n]$.  
\end{definition}

\begin{figure}[ht]
    \centering
    \includegraphics[width=0.8\linewidth]{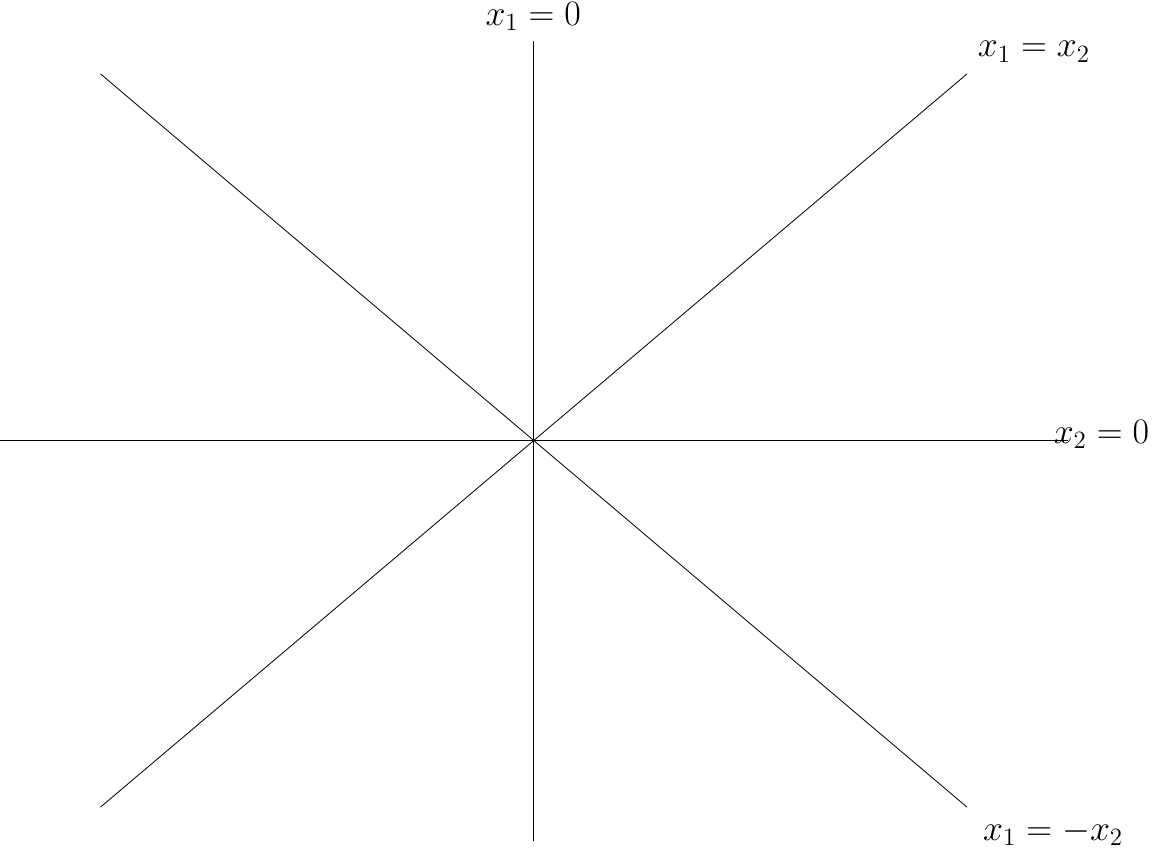}
    \caption{The two dimensional Type $B$ Coxeter arrangement $\mB_2$.}
    \label{fig:B2}
\end{figure}

\subsection{Symmetric graphs, orientations and signed source components}\hfill\\

In Section~\ref{gasec}, we defined subarrangements of the braid arrangement using simple graphs. We further related the chromatic polynomial of a simple graph to the characteristic polynomial of its corresponding arrangement. We can similarly define subarrangements of the Type $B$ Coxeter arrangement using \emph{symmetric graphs}. In this subsection, we will recall some definitions and results related to symmetric graphs, and state a generalization of Greene and Zaslavsky's statistic for coefficients of the chromatic polynomial of symmetric graphs. 

\begin{definition}
    Let $G = (\npm, E)$ be a graph. We say that $G$ is a \emph{symmetric graph} if for all $u, v \in \npm$, one has $\{u, v\} \in E$ if and only if $\{-u, -v\} \in E$. 
\end{definition}

\begin{definition}
    Let $G = (\npm, E)$ be a symmetric graph and let $k$ be a positive integer. A \emph{symmetric $k$-coloring} of $G$ is a function $c:\npm \ra [-k\semicolon k]$ such that $c(u) = -c(-u)$ for all $u \in [n]$. 
    We define the \emph{symmetric chromatic polynomial} as the polynomial $\chi_G^{\pm}(q)$ such that for all $k \in \N$, $$\chi_{G}^{\pm}(2k+1) = \text{ the number of proper symmetric $k$-colorings of $G$}. $$
\end{definition}

\begin{remark}\label{symmsigned}
     A \emph{signed graph} $\Sigma = (V, E', \sigma)$ is a graph $G = (V,E)$ together with a function $\sigma: E' \ra \{+, -\}$ assigning a sign to each edge. 
    To a symmetric graph $G = (\npm, E)$ one can associate the signed graph $\Sigma_G = ([n], E', \sigma)$ where 
    \begin{itemize}
        \item $e = \{u,v\} \in E'$ with $\sigma(e) = +$ if and only if $\{u,v\}$ and $\{-u,-v\} \in E$, and
        \item $e = \{u,v\} \in E'$ with $\sigma(e) = -$ if and only if $\{u,-v\}$ and $\{-u,v\} \in E$.
    \end{itemize}
    In fact, this defines a bijection between symmetric graphs on $\npm$ and signed graphs on $[n]$ where any loop is negative (as a positive loop would correspond to a loop in the symmetric graph). 

    Further, for a positive integer $k$, a \emph{proper $k$-coloring} of a signed graph $\Sigma = (V, E', \sigma)$ is a function $c: V \ra [-k\semicolon k]$ such that for an edge $e = \{u, v\} \in E'$, $c(u) \neq \sigma(e)c(v)$ and the \emph{signed chromatic polynomial} is the polynomial such that $$\chi_{\Sigma}(2k+1) = \text{ the number of proper $k$-colorings of $\Sigma$}.$$ 

    Given a proper symmetric $k$-coloring $c$ of a symmetric graph $G =(\npm, E)$, we can obtain a proper $k$-coloring $c'$ of the corresponding signed graph $\Sigma_G$ by defining $c'(u) = c(u)$ for all $u \in [n]$. In fact, this defines a bijection between proper symmetric $k$-colorings of a symmetric graph $G$ and proper $k$-colorings of the signed graph $\Sigma_G$. 

    Hence, $$\chi^{\pm}_G(q) = \chi_{\Sigma_G}(q).$$
\end{remark}

\begin{definition}
    Let $G = (\npm, E)$ be a symmetric graph. We say an orientation $\gamma$ of $G$ is \emph{symmetric} if for $u,v \in \npm$, we have $(u,v) \in \gamma$ if and only if $(-v, -u) \in \gamma$.  
\end{definition}

\begin{definition}
    Let $G = (\npm, E)$ be a symmetric graph and let $\gamma$ be an acyclic symmetric orientation of $G$. For $i \in [n]$, we define $\gamma_i$ to be the set of vertices reachable from $i$ by a directed path of $\gamma$ (with $i \in \gamma_i$), and $\gamma^{i}$ to be the set of vertices from which there is a directed path to $i$ (with $i \in \gamma^i$).  
\end{definition}

Note that for $i \in \npm$, $\gamma_i = -\gamma^{-i}$ as if $j$ is $i$-reachable via a directed path $iv_1v_2\ldots v_kj$ of $\gamma$, then $(-j)(-v_k)\ldots (-v_2)(-v_1)(-i)$ is also a directed path of $\gamma$ by symmetry of $\gamma$. So $-j \in -\gamma^{-i}$.

\begin{definition}\label{signedsourcecomp}
    Let $G = (\npm, E)$ be a symmetric graph and let $\gamma$ be an acyclic symmetric orientation of $G$. For $i \in [n]$, define $R_i = \begin{cases}
        \gamma_i & \text{ if } -i \notin \gamma_i \\
        \gamma_i \cap \gamma^{-i} & \text{ if } -i \in \gamma_i
    \end{cases}$, and set $B = \emp$. 
    We define $S_1, S_2\ldots$ recursively: for $k \geq 1$, if $\bigcup\limits_{i < k} (S_i \cup -S_i) \cup B = \npm$, then $S_k = \emp$. Otherwise, for $m = \min\left([n] \setminus \left(\bigcup\limits_{i < k} (S_i \cup -S_i) \cup B)\right)\right)$, 
    \begin{itemize}
        \item $B = B \cup R_m$ if $-m \in R_m$
        \item $S_k = R_m \setminus{\bigcup\limits_{i<k}S_i}$ if $-m \notin R_m$.
    \end{itemize}
     The non-empty subsets $S_k$ thus defined are the \emph{signed source components} of $\gamma$ and the final $B$ obtained is the \emph{zero block} of $\gamma$. 
\end{definition}

\begin{figure}[ht]
    \centering
    \includegraphics[width=0.85\linewidth]{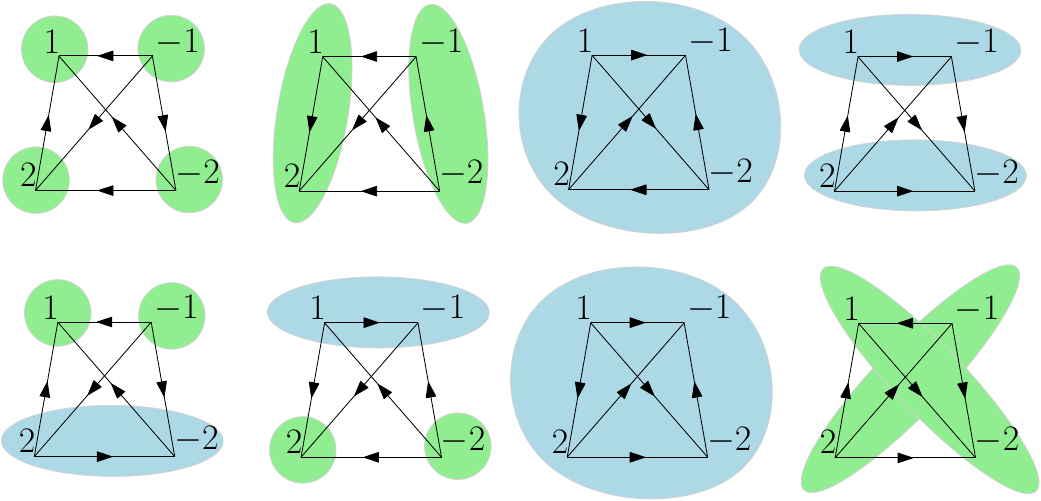}
    \caption{The signed source components of the $8$ acyclic orientations of the complete symmetric graph on $\pm[2]$. The signed source components $S_i$ and $-S_i$ are in green and the zero blocks are in blue.}
    \label{fig:SSC_Example}
\end{figure}

We will prove a generalization of Greene and Zaslavsky's Theorem~\ref{GZProj} for symmetric graphs by expressing the coefficients of $\chi_G^{\pm}(q)$ in terms of signed source components of symmetric acyclic orientations of the graph.
\begin{theorem}\label{SymmGZ}
    Let $G = (\npm, E)$ be a symmetric graph, and $k$ be a non-negative integer. Then, $$[q^k]\chi_G^{\pm}(q) = (-1)^{n-k}\beta_k$$ where $\beta_k$ is the number of symmetric acyclic orientations of $G$ with exactly $k$ signed source components. 
\end{theorem}

Theorem~\ref{SymmGZ} will be proved in Section~\ref{SecSymmProofs}. 

\begin{remark}
    Let $G = (\npm, E)$ be a symmetric graph and $\Sigma_G$ be the corresponding signed graph on $[n]$ as defined in Remark~\ref{symmsigned}. As $$\chi_{\Sigma_G}(q) = \chi_G^{\pm}(q), $$ Theorem~\ref{SymmGZ} gives a combinatorial interpretation of the coefficients of the chromatic polynomial of a signed graph with no positive loops. 
\end{remark}

\subsection{B-graphical arrangements}\hfill\\

In this section, we define subarrangements of the Type $B$ Coxeter arrangement and obtain a combinatorial interpretation for the coefficients of their characteristic polynomial via a projection statistic. 
\begin{definition}
    Let $G = (\npm, E)$ be a symmetric graph. The \emph{$B$-graphical arrangement} $\mB_G$ is defined as the collection of the following hyperplanes: 
    $$\mB_G := \{H_{i,j} \mid \{i,j\} \in E\},$$
    where $H_{i,j} = \{(x_1, \ldots, x_n) \mid x_i = x_j\}$, with $x_{-i} = -x_i$ for all $i \in [n]$.
\end{definition}

\begin{remark}
    The $B$-graphical arrangements are precisely the subarrangements of $\mB_n$. Hence, it is clear that subarrangements of $\mB_n$ are in bijection with symmetric graphs on $\npm$. 
\end{remark}

As with chromatic polynomials of graphs and characteristic polynomials of graphical arrangements, we have a similar relation between the symmetric chromatic polynomial of a symmetric graph and the characteristic polynomial of the corresponding $B$-graphical arrangement.
\begin{theorem}\cite{ZasSignedGraph}\label{ZasChromChar}
    Let $G = (\npm, E)$ be a symmetric graph and $\mB_G$ the corresponding $B$-graphical arrangement. Then, 
    $$\chi_{G}^{\pm} = \chi_{\mB_G}.$$  
\end{theorem}

Now, in order to obtain a combinatorial interpretation for the projection statistic, we need a labeling of the regions. As with graphical arrangements, we associate to each region an acyclic orientation of the underlying graph. 
\begin{figure}[ht]
    \centering
    \includegraphics[width=0.8\linewidth]{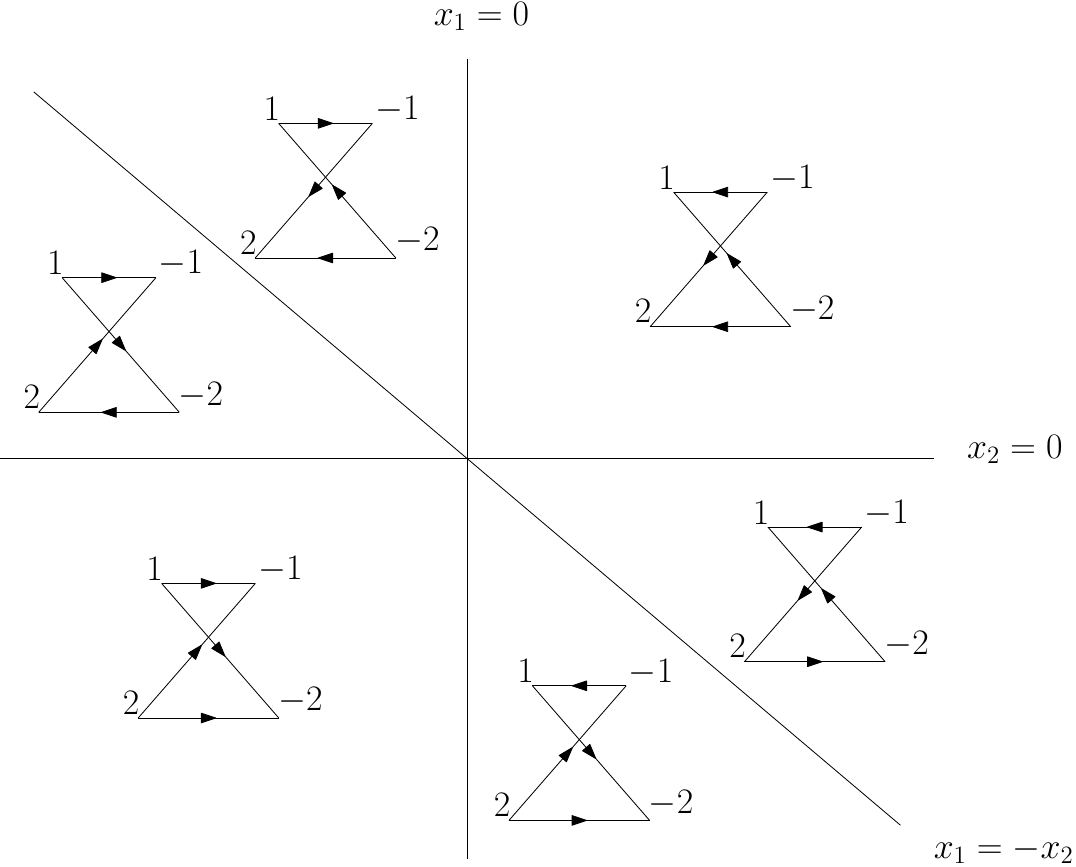}
    \caption{A $\mB$-graphical arrangement with the regions labeled by the symmetric acyclic orientations given by $\gamma$.}
    \label{fig:AcycSymmLab}
\end{figure}
\begin{definition}
    Let $R$ be a region of $\mB_G$. We associate to $R$ an acyclic orientation of $G$, denoted $\gamma_R$, by directing the edge $\{i, j\} \in E$ towards $i$ if and only if $x_i \geq x_j$ in $R$.  
\end{definition}

In fact, the acyclic orientations thus obtained are symmetric. See Fig.~\ref{fig:AcycSymmLab} for an example. 
The following result was given in~\cite{ZasSignedGraphOr}\footnote{The reference gives a bijection between regions of $\mB_G$ and acyclic orientations of the signed graph $\Sigma_G$. These acyclic orientations are in bijection with symmetric acyclic orientations of $G$. }. 
\begin{lemma}~\cite{ZasSignedGraphOr}
    The mapping $\gamma$ which associates to each region $R$ of $\mB_G$ the orientation $\gamma_R$ is a bijection between the set of regions of $\mB_G$ and the set of symmetric acyclic orientations of $G$. 
\end{lemma}

Next, we talk about the labels of the faces of the Type $B$ Coxeter arrangement. 
\begin{definition}
    Let $\Pi$ be a partition of $\npm$. We say $\Pi$ is a \emph{$\mB_n$-partition} if for any block $B$ of $\Pi$, its negative $-B$ is also a block of $\Pi$, and there is exactly one (possibly empty) block, called the \emph{zero block}, such that $B = -B$. 
    We say an ordered $\mB_n$-partition is \emph{antipalindromic} if it is of the form $(B_1,\ldots, B_k, B_0, B_{-k}, \ldots, B_{-1})$ where $B_{-i} = -B_i$ for all $i \in [k]$ and $B_0$ is the zero block. 
\end{definition}

Reiner showed in~\cite{Reiner} that the flats of the Type $B$ Coxeter arrangement are in bijection with the $\mB_n$-partitions of $\npm$, with the zero block indicating which coordinates are zero and the other blocks indicating which coordinates are equal (that is, if $i$ and $j$ are in the same block, then $x_i = x_j$ for any point $x$ of the flat with the convention that $x_{-i} = -x_i$ for all $i \in [n]$). 

Further, the faces of the Type $B$ Coxeter arrangement $\mB_n$ are in bijection with antipalindromic ordered $\mB_n$-partitions. Here, the relative order of the blocks indicates the relative order of the corresponding coordinates (that is, if $i$ is in a block before $j$, then $x_i > x_j$ for all points $x$ of the face). Note that the order of the blocks after $B_0$ is a mirror of the order before, so without loss of generality, we can consider $(B_1, \ldots, B_k, B_0)$ as the label of the face. Fig.~\ref{fig:TypeBFaces} demonstrates this labeling for the arrangement $\mB_2$.

\begin{figure}[ht]
    \centering
    \includegraphics[width=0.8\linewidth]{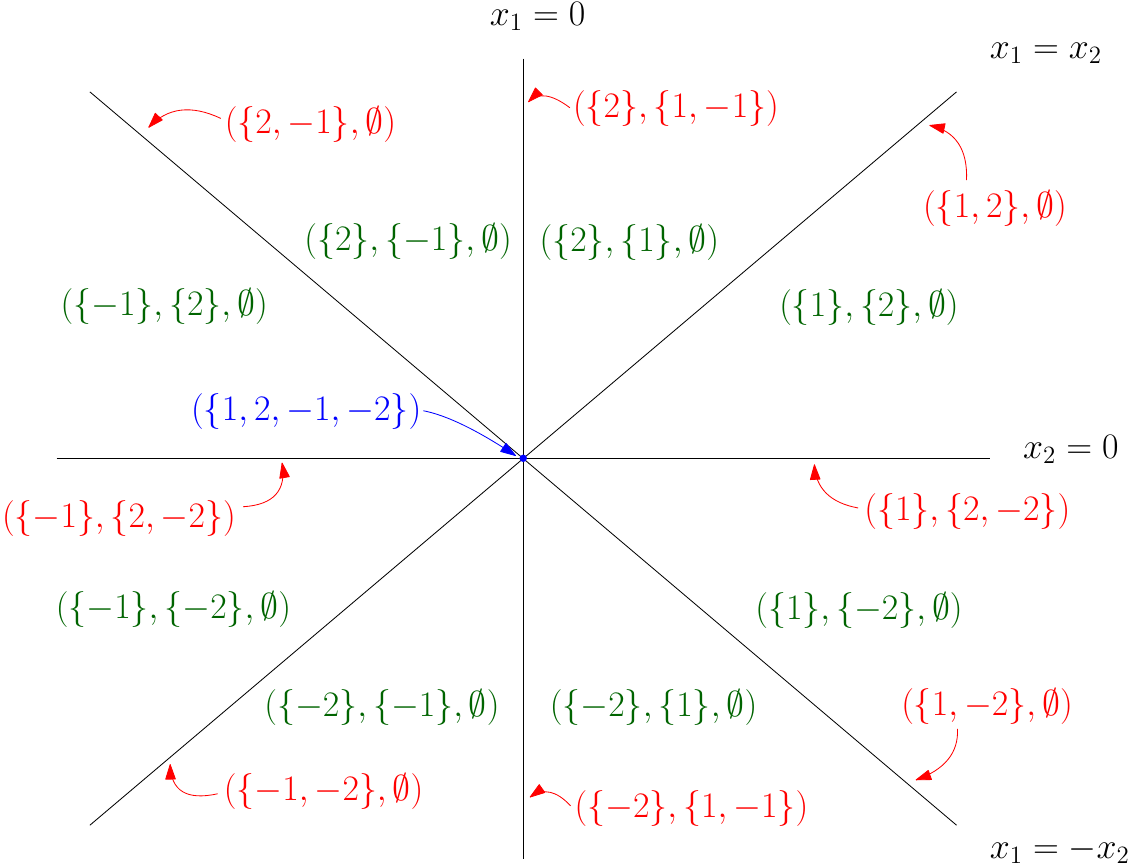}
    \caption{$\mB_2$ with the faces labeled with antipalindromic ordered $\mB_2$-paritions. The last set in the tuple is the (possibly empty) zero block. }
    \label{fig:TypeBFaces}
\end{figure}

\begin{definition}
    Let $G = (\npm, E)$ be a symmetric graph and $\gamma$ be a symmetric acyclic orientation of $G$. Let $S_1, \ldots, S_k$ be the signed source components of $\gamma$ and $B$ be the zero block of $\gamma$. Note that $B = -B$ and $(S_1, \ldots, S_k, B, -S_k, \ldots, -S_1)$ forms an antipalindromic $\mB_n$-partition of $\npm$. Let us denote this partition by $\Pi(\gamma)$ and the face labeled by it $F_{\Pi(\gamma)}$.  
\end{definition}

The following is the main result of this section. 
\begin{theorem}\label{typeBmain}
    Let $v = (v_1, \ldots, v_n) \in \R^n$ be such that
    \begin{equation}\label{vec_b}
        \forall i \in [n-1], \,\, v_i > (14n^2 + 1)v_{i+1} \text{ and } v_n > 0.
    \end{equation}
    Let $G = (\npm, E)$ be a symmetric graph, let $R$ be a region of the $B$-graphical arrangement $\mB_G$, and let $\gamma_R$ be the symmetric acyclic orientation labeling $R$. Suppose $\gamma_R$ has $k$ signed source components. 

    Then, $\pd_v(R) = k$, that is, the projection dimension of $v$ on $R$ equals the number of signed source components of $\gamma_R$. In fact, $\proj_v(R)$ lies in the relative interior of the face $F_{\Pi(\gamma_R)}$. 
\end{theorem}
\begin{figure}[ht]
    \centering
    \includegraphics[width=0.8\linewidth]{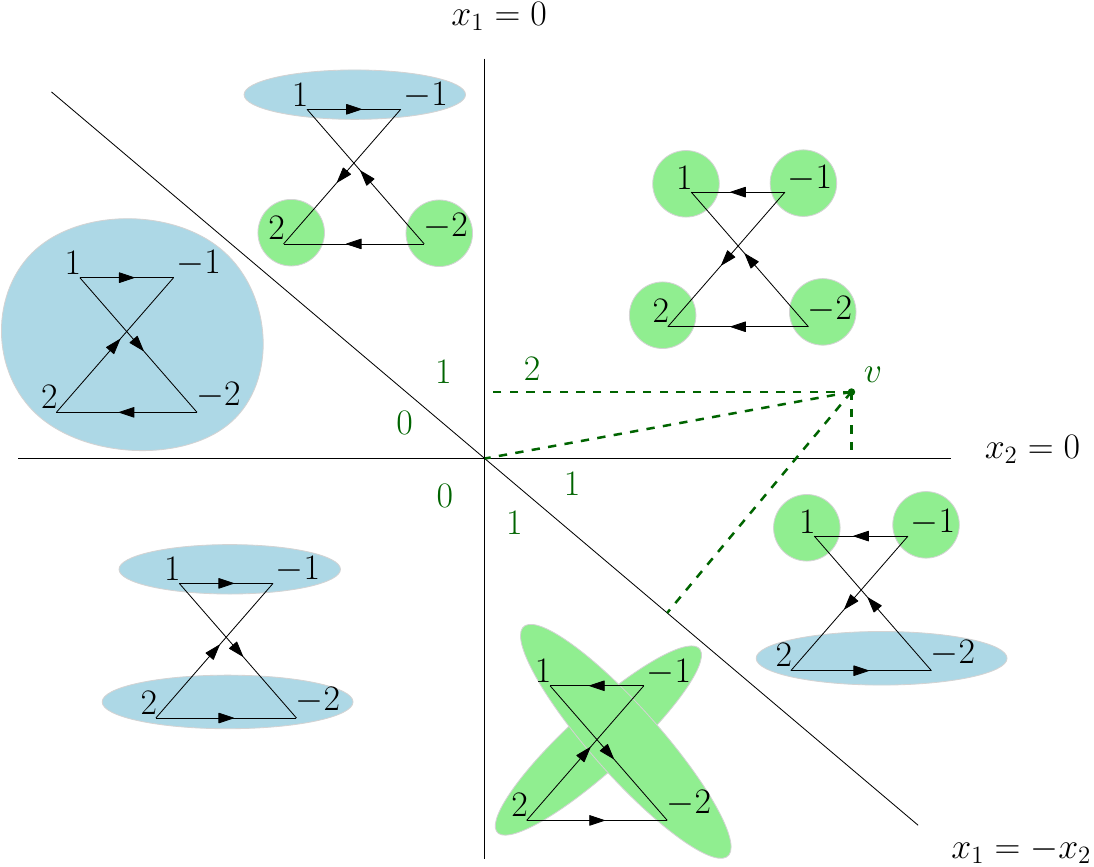}
    \caption{A $\mB$-graphical arrangement with projection from a point $v$ as in Equation~\eqref{vec_b} onto the regions, with the regions labeled by symmetric acyclic orientations with signed source components (signed source components both $S_i$ and $-S_i$ in green, zero block $B$ in blue) and projection dimension in green. }
    \label{fig:BGraphProj}
\end{figure}

The following corollary is a simple consequence of Theorems~\ref{Kab} and~\ref{typeBmain}.
\begin{corollary}\label{BCharCoeffs}
    Let $G = (\npm, E)$ be a symmetric graph and let $\mB_G$ be the corresponding $B$-graphical arrangement. Then, $$[q^k]\chi_{\mB_G}(q) = (-1)^{n-k}\beta_k$$ where $\beta_k$ is the number of symmetric acyclic orientations of $G$ with $k$ signed source components.
\end{corollary}

\begin{remark}
    Note that the Type $A$, Type $B$ and Type $D$ Coxeter arrangements are all subarrangements of the Type $B$ Coxeter arrangement and hence they are $B$-graphical arrangements. Hence Corollary~\ref{BCharCoeffs} gives us a combinatorial interpretation for the coefficients of the characteristic polynomials of these arrangements. 

    There are other combinatorial interpretations of these coefficients. For example, in~\cite{DM}, Deshpande and Menon give an interpretation that is a generalization of the right-to-left minima statistic on signed permutations. However, unlike for the braid arrangement, where the RLmin statistic coincides with the number of source components, Deshpande and Menon's statistic does not coincide with our generalization to signed source components.  
\end{remark}

\subsection{Proofs of Theorem~\ref{SymmGZ} and Theorem~\ref{typeBmain}}\label{SecSymmProofs}\hfill
\begin{remark}\label{GZGenRemProof}
    Note that Theorem~\ref{SymmGZ} is a direct consequence of Theorem~\ref{ZasChromChar} and Corollary~\ref{BCharCoeffs}.
\end{remark}

Now, before we proceed to the proof of Theorem~\ref{typeBmain}, we need to prove some lemmas that allow us to characterize the faces that the projection from a point $v$ lies in the relative interior of. 
\begin{lemma}\label{typeBproj}
    Let $v = (v_1, \ldots, v_n) \in \R^n$ and $\Pi$ be a $\mB_n$-partition of $\npm$ labeling a flat $X$ of $\mB_n$. For $i \in [n]$, let $B$ be the block of $\Pi$ containing $i$. Then the $i^{th}$ coordinate of the projection of $v$ on $X$ is given by $\dfrac{\sli_{j \in B}v_j}{|B|}$, where $\forall j \in [n], \, v_{-j} = -v_j$. 
\end{lemma}
\begin{proof}
    Recall that $\proj_v(X)$ is the orthogonal projection of $v$ on $X$. 

    Let $\mathbb{B} = \{B_1, \ldots, B_k\}$ be such that $B \in \mathbb{B}$, for all non-zero blocks $B'$ of $\Pi$, exactly one of $B'$ and $-B'$ is in $\mathbb{B}$ and $\mathbb{B}$ contains the zero block if and only if it is non-empty. Without loss of generality, let $B_1 = B$. 

    Now, for $j \in [k]$, define $f_j = \sli_{i \in B_j} e_i$ where $\forall i \in [n]$, $e_i$ is the $i^{th}$ standard basis vector of $\R^n$ and $e_{-i} = -e_i$. Note that the $i^{th}$ coordinate of $f_j$ for $j \neq 1$ is $0$ and the $i^{th}$ coordinate of $f_1$ is $1$ (as $i \in B_1$). 

    Then two cases arise. 

    \begin{adjustwidth}{2em}{0pt}
        \textbf{Case 1:} $B$ is not the zero block. 

        Then, $O = \{f_j\}_{j \in [k]}$ forms an orthogonal basis of $X$. 
        Hence, $\proj_v(X) = \sli_{j \in [k]} \dfrac{\langle v, f_j\rangle}{||f_j||^2}f_j =  \dfrac{\sli_{j \in B}v_j}{|B|}.$

        \noindent \textbf{Case 2:} $B$ is the zero block. 

        Then, $f_1 = 0$, and $O = \{f_j\}_{1 < j \leq k}$ forms an orthogonal basis of $X$. 

        Hence, $\proj_v(X) = \sli_{j \in [k]} \dfrac{\langle v, f_j\rangle}{||f_j||^2}f_j = 0 =   \dfrac{\sli_{j \in B}v_j}{|B|}.$
    \end{adjustwidth}

    Hence the result holds. 
\end{proof}

\begin{definition}
    Let $\Pi$ be a $\mB_n$-partition of $\npm$ and $B$ be a block that is not the zero block. We define 
    $$\absmin(B) := i \in B \text{ such that } |i| \leq |j| \text{ for all } j \in B.$$
\end{definition}
Note that as $B$ is not the zero block, it contains at most one of $i$ or $-i$ for all $i \in [n]$, and hence $\absmin$ is well defined. 

The following lemma characterizes the $v$-faces (see Definition~\ref{goodface}) of $\mB_n$.

\begin{lemma}\label{typeBgoodface}
    Let $v \in \R^n$ be such that $v_i > 2nv_{i+1}$ for all $i \in [n-1]$ and $v_n >0$. Let $F$ be a face of $\mB_n$ labeled by $(B_1, \ldots, B_k, B_0)$. Then $F$ is a $v$-face of $\mB_n$ if and only if  $0< \absmin(B_i) < \absmin(B_{i+1})$ for all $i \in [k-1]$. 
\end{lemma}
\begin{proof}
    Let $X = \s(F)$, that is, $X$ is the flat labeled by the unordered partition $\Pi = \{B_1, \ldots, B_k, B_0, B_{-1}, \ldots, B_{-k}\}$. Let $\proj_v(X) = (p_1, \ldots, p_n)$. 

    Let $F$ be a $v$-face of $\mB_n$. We first show that any non-zero block $B$ of $\Pi$ with $\absmin(B)>0$ appears before $B_0$ in the ordered partition labeling $F$. Next, we show that given two non-zero blocks $B$ and $B'$ of $\Pi$ with $0 < \absmin(B) < \absmin(B')$, $B$ must appear before $B'$ in the ordered partition of $F$.

    Let $B =  \{i_1, \ldots i_s, -j_1, \ldots, -j_t\}$ be a non-zero block of $\Pi$ such that $\absmin(B) = i_1 >0$. Then, from Lemma~\ref{typeBproj}, as $v_{k} \geq v_n$ for all $k \in [n]$, and as $j_k \geq i_1 + 1$, $v_{j_k} \leq v_{i_1 + 1}$ for all $k \in [t]$, we have $$p_{i_1} = \dfrac{v_{i_1} + \ldots v_{i_s} - v_{j_1} - \ldots - v_{j_t}}{s+t} \geq \dfrac{v_{i_1} + (s-1)v_n - tv_{i_1 + 1}}{s+t}.$$
    Now, as $v_{i_1} > 2nv_{i_1 + 1}$, we have $p_{i_1} > 0$. Hence, any block $B$ of $\Pi$ such that $\absmin(B) > 0$ appears before $B_0$ in the ordered partition labeling $F$. 

    Let $B' = \{i_1', \ldots i_{s'}', -j_1', \ldots, -j_{t'}'\}$ be another non-zero block of $\Pi$ such that $\absmin(B') = i_1'$. Further, we assume that $0 < i_1 < i_1'$.
    
    Then, from Lemma~\ref{typeBproj}, as $v_{i_k'} \leq v_{i_1'}$ for all $k \in [s']$, and as $i_1 < i_1'$, 
    \begin{align*}
        p_{i_1'} = \dfrac{v_{i_1'}+ \ldots + v_{i_{s'}'} - v_{j_1'} - \ldots - v_{j_{t'}'}}{s'+t'} \leq v_{i_1'} \leq v_{i_1 + 1}.
    \end{align*}
    Hence, 
    \begin{align*}
        p_{i_1} - p_{i_1'} \geq \dfrac{v_{i_1} + (s-1)v_n - (s+2t) v_{i_1 + 1}}{s+t} \geq \frac{(2n -s -2t) v_{i_1 + 1} + (s-1) v_n}{s+t} > 0.   
    \end{align*}
    Hence $B$ must appear before $B'$ in the ordered partition.

    Hence we have shown that if $F$ is a $v$-face of $\mB_n$, then $0 < \absmin(B_i) < \absmin(B_{i+1})$ for all $i \in [k-1]$. Conversely, let $F$ be a face of $\mB_n$ labeled by $(B_1, \ldots, B_k, B_0)$ such that $0 < \absmin(B_i) < \absmin(B_{i+1})$ for all $i \in [k-1]$. Note that for $X = \s(F)$, exactly one face of $X$ satisfies this condition. Further, the projection of $v$ on $X$ lies in a unique face of $X$ which satisfies this condition. Hence, the projection of $v$ on $X$ must lie in $F$, that is, $F$ is a $v$-face of $\mB_n$. 
\end{proof}

\begin{lemma}\label{typeBcontainmentlemma}
    Let $G = (\npm, E)$ be a symmetric graph, let $R$ be a region of $\mB_G$ and let $\gamma_R$ be the symmetric acyclic orientation of $G$ labeling $R$. Let $\Pi(\gamma_R) = (B_1, \ldots, B_k, B_0)$. Then, for $v = (v_1, \ldots, v_n)\in \R^n$ such that $v_i > 2nv_{i+1}$ for all $i \in [n-1]$, $F_{\Pi(\gamma_R)}$ is a $v$-face of $R$. Moreover, if $\Pi' = (D_1, \ldots, D_{\ell}, D_0)$ is such that $F_{\Pi'}$ is a $v$-face of $R$, we have for all $j \in [k]$,  
    \begin{equation}\label{typeBcontainmenteq}
        \bigcup\limits_{i = 1}^j B_i \sse \bigcup\limits_{i = 1}^j D_i.
    \end{equation}
\end{lemma}
\begin{proof}
    Let $\Pi$ be an antipalindromic ordered $\mB_n$-partition of $\npm$. Then the flat $\s(F_{\Pi})$ of $\mB_n$ is a flat of $\mB_G$ if and only if
    \begin{itemize}
        \item for every non-zero block $B$ of $\Pi$, the induced subgraph $G[B]$ is connected
        \item there is a path between $i$ and $-i$ for any $i$ in the zero block $B_0$.
    \end{itemize}
    Clearly, by the definition of signed source components, this holds for $\Pi(\gamma_R)$. 
    
    Further, $F_{\Pi}$ is a face of a region $R$ if and only if any weak inequality that holds in $F_{\Pi}$ holds in $R$. Hence it is easy to see that $F_{\Pi(\gamma_R)}$ is a face of $R$. 
    Further, by definition of signed source components, $0< \absmin(B_i) < \absmin(B_{i+1})$ for all $i < k$. Hence $F_{\Pi(\gamma_R)}$ is a $v$-face of $R$. 
    
    The proof of Equation~\eqref{typeBcontainmenteq} follows the same lines as that of Equation~\eqref{containment}.
\end{proof}

We can now complete the proof of Theorem~\ref{typeBmain}. 
\begin{proof}[Proof of Theorem~\ref{typeBmain}]
    By Lemma~\ref{typeBcontainmentlemma} we know that $F:= F_{\Pi(\gamma_R)}$ is a $v$-face of $R$. Hence it suffices to show that for any $v$-face $F' \neq F$ of $R$, one has $||v - \proj_v(F)|| < ||v - \proj_v(F')||$.

    Let $\Pi(\gamma_R) = (B_1, \ldots, B_k, B_0)$, and let $\absmin(B_i) = b_i$. By the definition of source components, we have $0 < b_1 < \ldots < b_k$.
    
    Let $F'$ be another $v$-face of $R$, and let $(D_1, \ldots, D_{\ell}, D_0)$ be its label satisfying $\absmin(D_i) < \absmin(D_{i+1})$ for all $i < \ell$. Now, suppose that $F'$ is the face of $R$ that $v$ projects into.

    \begin{adjustwidth}{2em}{0pt}
        \textbf{Claim:} Each of $D_1, \ldots, D_{\ell}$ contains at most one of the $b_i$'s.

        Suppose not. Then, for some $t \in [\ell]$, some $D_t$ contains $b_i$ and $b_{i'}$ with $i'> i$.  Let $D_t'$ be the set of all $b_{i'}$-reachable elements of $D_t$ (including $b_{i'}$). Let $\Pi_1 = (D_1, \ldots, D_{t}\setminus D_{t}', D_t', \ldots, D_{\ell}, D_0)$ correspond to the face $F_1$ of $R$. Clearly, $F_1$ is another face of $R$ of higher dimension containing $F$. Now, suppose we have $b_i < \absmin(D_j) < b_{i'}$ for $j \in [t+1; r]$ and $\absmin(D_{r+1}) > b_{i'}$. Let $\Pi_2 = (D_1, \ldots, D_t\setminus D_t', D_{t+1}, \ldots, D_r, D_t', \ldots, D_{\ell}, D_0)$ correspond to the face $F_2$ of $\mB_G$. Then $F_2$ is a $v$-face of $\mB_G$. 

        \begin{adjustwidth}{2em}{0pt}
            \textbf{Subclaim:} $\Pi_1$ and $\Pi_2$ correspond to the same face of $\mB_G$. 

            Let $j \in [t+1; r]$ and let $\al \in D_j$. Then, for any $\beta \in D_t'$, we must have that there is no directed path from $\beta$ to $\al$ in $\gamma_R$. If there were such a path, we would have $x_{\al} \geq x_{\beta}$ for all $x \in R$, and hence $D_j \sse D_t$ or $D_j$ must appear before $D_t$ in $\Pi_2$, which is not true. Hence, $D_t'$ and $D_j$ are independent for all $j \in [t+1;r]$ and any ordering of these blocks corresponds to the same face of $\mB_G$. 
        \end{adjustwidth}
    \end{adjustwidth}

    Hence, $F_2$ is a $v$-face of $R$ of higher dimension containing $F$. The projection of $v$ onto this face must be of shorter length than the projection onto $F'$, which is a contradiction. Hence $\ell \geq k$. Further, from Lemma~\ref{typeBcontainmentlemma}, we must have that for all $i \in [k]$, $b_i \in D_i$.

    Now, from Equation~\eqref{typeBcontainmenteq}, $\exists j \in [k]$ such that $D_j \supsetneq B_j$ and $D_i = B_i$ for all $i < j$. Let $j$ be the first index where $D_j \supsetneq B_j$, and let $|D_j| = d$, $|B_j| = b$. 

    From Lemma~\ref{typeBproj}, we have $p_B = \proj_v(F_{\Pi(\gamma_R)}) = (p_1, \ldots, p_n)$, where $\forall i \in B_j$, $p_i = \dfrac{\sli_{m \in B_j} v_m}{|B_j|}$, and $p_D = \proj_v(F') =  (q_1, \ldots, q_n)$ where $\forall i \in D_j$, $q_i = \dfrac{\sli_{m \in D_j} v_m}{|D_j|}$, with the usual convention $v_{-i} = -v_i$ for all $i \in [n]$. 

    It now suffices to show $||v - p_B|| < ||v - p_D||$.

    Then, 
    \begin{align*}
        ||v - p_B||^2 &=  \sli_{i = 1}^n (v_i - p_i)^2 \\
        &=  \sli_{\substack{i \in B_m \\0<m < j}} (v_i - p_i)^2 + \sli_{i \in B_j}(v_i - p_i)^2 + \sli_{\substack{i \in B_m\\ m>j}} (v_i - p_i)^2 + \sli_{i \in B_0} (v_i - p_i)^2.
    \end{align*}
    Now,
    \begin{align*}
        (v_{b_j} - p_{b_j})^2 = \left(\dfrac{(b-1)}{b}v_{b_j} - \dfrac{1}{b}\sli_{\substack{i \in B_j \\ i \neq b_j}} v_i\right)^2 = \left(\dfrac{b-1}{b}\right)^2v_{b_j}^2 + \eps_1,
    \end{align*}
    where
    \begin{align*}
        \eps_1 = \left(\dfrac{1}{b}\sli_{\substack{i \in B_j \\ i \neq b_j}}v_i\right)^2 - \dfrac{2(b-1)}{b}v_{b_j}\left(\dfrac{1}{b}\sli_{\substack{i \in B_j \\ i \neq b_j}}v_i\right). 
    \end{align*}
    Now, as $v_i \geq (14n^2 + 1)v_{i+1}$ for all $i < n$, 
    \begin{align*}
        \left|\frac{1}{b}\sli_{\substack{i \in B_j \\ i \neq b_j}} v_i\right| &\leq \frac{1}{b}\left(\frac{1}{14n^2 + 1}v_{b_j} + \frac{1}{(14n^2 + 1)^2}v_{b_j} + \ldots + \frac{1}{(14n^2 + 1)^{b-1}}v_{b_j} \right) \\
        &\leq \frac{1}{14n^2b}v_{b_j}.
    \end{align*}
    Hence,
    \begin{align*}
        |\eps_1| \leq \frac{2(b-1)}{14n^2b^2}v_{b_j}^2 + \frac{1}{196n^4b^2}v_{b_j}^2 \leq \frac{2b - 1}{14n^2b^2}v_{b_j}^2.
    \end{align*}
    Further, 
    \begin{align*}
        \sli_{\substack{i \in B_j \\ i \neq b_j } }(v_i - p_i)^2 = \sli_{\substack{i \in B_j \\ i \neq b_j}}\left(\dfrac{v_{b_j}}{b} + \left(\sli_{\substack{\ell \in B_j \\ \ell \neq b_j}}\dfrac{v_{\ell}}{b} - v_i\right)\right)^2 = \dfrac{(b-1)}{b^2}v_{b_j}^2 + \eps_2,
    \end{align*}
    where
    \begin{align*}
        \eps_2 = \sli_{\substack{i \in B_j \\ i \neq b_j}}\dfrac{2v_{b_j}}{b}\left(\sli_{\substack{\ell \in B_j \\ \ell \neq b_j}}\dfrac{v_{\ell}}{b} - v_{i}\right)
        + \sli_{\substack{i \in B_j \\ i \neq b_j}}\left(\sli_{\substack{\ell \in B_j \\ \ell \neq b_j}}\dfrac{v_{\ell}}{b} - v_{i}\right)^2. 
    \end{align*}
    Now, as $v_i \geq (14n^2 + 1)v_{i+1}$ for all $i < n$,
    \begin{align*}
        \left|\sli_{\substack{\ell \in B_j \\ \ell \neq b_j}}\frac{v_{\ell}}{b} - v_i\right| \leq \frac{2}{14n^2 + 1}v_{b_j},
    \end{align*}
     and hence,
    \begin{align*}
        |\eps_2| \leq \frac{4(b-1)}{b(14n^2 + 1)}v_{b_j}^2 + \frac{2(b-1)}{(14n^2 + 1)^2}v_{b_j}^2.
    \end{align*}
    Finally, let \begin{align*}
        \eps_3 = \sli_{i \in B_m, m > j} (v_i - p_i)^2.
    \end{align*}
    Now, for $i \in B_m$, $m > j$,
    \begin{align*}
        |v_i - p_i| \leq \frac{2}{14n^2 + 1}v_{b_j},
    \end{align*}
    Hence,   
    \begin{align*}
        |\eps_3| = \left|\sli_{i \in B_m, m > j} (v_i - p_i)^2\right| \leq \frac{4(n-b)}{(6n^2 + 1)^2}v_{b_j}^2.
    \end{align*}
    Now, for $\eps = \eps_1 + \eps_2 + \eps_3$,
    \begin{align*}
        |\eps| &\leq |\eps_1| + |\eps_2| + |\eps_3| \\
        &\leq \left(\frac{2b-1}{14n^2b^2} + \frac{4(b-1)}{b(14n^2 + 1)} + \frac{2(b-1)}{(14n^2 + 1)^2} + \frac{4(n-b)}{(14n^2 + 1)^2}\right)v_{b_j}^2 \\
        &\leq \frac{1}{14n^2}\left(\frac{2b-1}{b^2} + \frac{4(b-1)}{b} + \frac{4n -2b -2}{14n^2 + 1}\right)v_{b_j}^2 \\
        &\leq \frac{1}{2n^2}v_{b_j}^2.
    \end{align*}  
    Then, two cases arise: 
    \begin{adjustwidth}{2em}{0pt}
    \textbf{Case 1}: $B_0 = D_0$.
    
    Then, 
    \begin{align*}
        ||v - p_B||^2 
        \leq \sli_{i \in B_m, m < j} (v_i - p_i)^2 + \left(1 - \dfrac{1}{b}\right)v_{b_j}^2 + \dfrac{1}{2n^2}v_{b_j}^2.
    \end{align*}
    
    Similarly,  
    \begin{align*}
        ||v - p_D||^2 
        \geq \sli_{i \in D_m, m < j} (v_i - q_i)^2 + \left( 1- \dfrac{1}{d}\right)v_{b_j}^2 - \dfrac{1}{2n^2}v_{b_j}^2.
    \end{align*}
    
    Now, as $B_i = D_i$ for all $i < j$, and $d > b$, 
    \begin{align*}
        ||v - p_D||^2 - || v - p_B||^2 \geq \left(\dfrac{1}{b} - \dfrac{1}{d} - \dfrac{1}{n^2}\right)v_{b_j}^2 > 0. 
    \end{align*}
    \end{adjustwidth}

    \begin{adjustwidth}{2em}{0pt}
        \textbf{Case 2}: $B_0 \neq D_0$.
        \begin{adjustwidth}{2em}{0pt}
        \noindent Then, as a consequence of Equation~\eqref{typeBcontainmenteq}, we have $D_0 \subsetneq B_0$. 

        \noindent Let $t$ be the least index such that $D_t$ contains an element of $B_0$, and let the least such element in $D_t$ be $\beta$. Further, let $\absmin(D_t) = d_t$.

        \noindent Now, as $\beta$ is an element of $B_0$, there exists an index $i \in B_0$, $i > 0$ such that $\beta$ lies on a directed path from $i$ to $-i$. Let $\alpha$ be the least such index. Then, as $-\alpha$ is $\beta$-reachable, we have $-\alpha \in D_t$. In fact, as $-\alpha \in B_0$, and $\alpha \leq |\beta|$, we have $\beta = -\alpha$. Note that we cannot have $|\beta| < d_t$ as then $\absmin(D_t) = \beta < 0$, contradicting the fact that $F'$ is a $v$-face.  

        \noindent Further, any element of $B_0 \setminus D_0$ must be greater than $d_t$ in absolute value as if not, we will have $\al' \in D_s$ for some $s>t$, with $|\al'| < d_t$. Then $\absmin(D_s) < \absmin(D_t)$, contradicting that $F'$ is a $v$-face. 

        \noindent Now, 
        \begin{align*}
            (v_{\beta} - q_{\beta})^2 = \left(\frac{v_{d_t}}{|D_t|} + \frac{\sli_{\substack{i \in D_t\\ i \neq d_t}}v_i}{|D_t|} - v_{|\beta|}\right)^2 \geq \left(\frac{v_{d_t}}{2|D_t|}\right)^2, 
        \end{align*}
        and, as $p_i = 0$ for $i \in B_0$,
        \begin{align*}
            \sli_{i \in B_0 \setminus D_0} (v_i - p_i)^2 = \sli_{i \in B_0 \setminus D_0} v_i^2 \leq \frac{v_{d_t}^2}{14n^2} \leq \left(\frac{v_{d_t}}{2|D_t|}\right)^2.
        \end{align*}
        
        \noindent Hence, 
        \begin{align*}
            ||v - p_{B}||^2 \leq \sli_{\substack{i \in B_m\\ 0<m < j}} (v_i - p_i)^2 + &\left(1 - \dfrac{1}{b}\right)v_{b_j}^2 + \dfrac{1}{2n^2}v_{b_j}^2 \\&+ \left(\frac{v_{d_t}}{2|D_t|}\right)^2 + \sli_{i \in D_0} (v_i - p_i)^2.
        \end{align*}
        and, 
        \begin{align*}
            ||v - p_D||^2 
        \geq \sli_{\substack{i \in B_m\\ 0<m < j}} (v_i - q_i)^2 + &\left( 1- \dfrac{1}{d}\right)v_{b_j}^2 - \dfrac{1}{2n^2}v_{b_j}^2 \\&+ \left(\frac{v_{d_t}}{2|D_t|}\right)^2 + \sli_{i \in D_0} (v_i - q_i)^2.
        \end{align*}
        Now, as $B_i = D_i$ for all $i \in [j-1]$, $p_i = q_i = 0$ for $i \in D_0$, and $d>b$, 
        \begin{align*}
            ||v - p_D||^2 - || v - p_B||^2 \geq \left(\dfrac{1}{b} - \dfrac{1}{d} - \dfrac{1}{n^2}\right)v_{b_j}^2 > 0. 
        \end{align*}
        \end{adjustwidth}
    \end{adjustwidth}
\end{proof}

\begin{remark}
    Let $G = ([n], E)$ be a graph. Note that the graphical arrangement $\mA_G$ is a subarrangement of the Type $B$ Coxeter arrangement. In fact, for the graph $G^{\pm}$ on $\npm$ defined as $G \cup -G$, where $-G = (-[n], -E)$, $\mA_G$ is precisely the $B$-graphical arrangement $\mB_{G^{\pm}}$. Further, the signed source components of any symmetric acyclic orientation of $G^{\pm}$ are precisely the source components of the corresponding acyclic orientation on $G$. Hence, if we restrict Theorems~\ref{SymmGZ} and ~\ref{typeBmain} to symmetric graphs corresponding to graphical arrangements, we recover Greene and Zaslavsky's result (Theorem~\ref{GZProj}) and Theorem~\ref{graphSC} respectively. 
\end{remark}

\section*{Acknowledgements}
I owe many thanks to Theo Douvropoulos and Olivier Bernardi for supplying the initial problem. I thank Theo Douvropoulos for supplying some initial results (Theorem \ref{braidRLmin} and Lemmas \ref{projchar} and \ref{GFCond}), and his guidance and suggestions. I also thank Olivier Bernardi for suggesting the extension to natural unit interval graphs and being a constant source of guidance throughout. In addition, I thank Vasiliy Neckrasov for his help with Lemma \ref{lbdim}.

\printbibliography

@book{Background,
    author = "Richard P. Stanley",
    title = "Enumerative Combinatorics: Volume 1 (2nd. ed.)",
    publisher = "Cambridge University Press, USA",
    year = "2011"
}

@article{BN,
title = {Combinatorial reciprocity for the chromatic polynomial and the chromatic symmetric function},
journal = {Discrete Mathematics},
volume = {343},
number = {10},
pages = {111989},
year = {2020},
issn = {0012-365X},
doi = {https://doi.org/10.1016/j.disc.2020.111989},
author = {Olivier Bernardi and Philippe Nadeau},
keywords = {Acyclic orientations, Derivatives of the chromatic polynomial, Heaps},
abstract = {Let G be a graph, and let χG be its chromatic polynomial. For any non-negative integers i,j, we give an interpretation for the evaluation χG(i)(−j) in terms of acyclic orientations. This recovers the classical interpretations due to Stanley and to Greene and Zaslavsky respectively in the cases i=0 and j=0. We also give symmetric function refinements of our interpretations, and some extensions. The proofs use heap theory in the spirit of a 1999 paper of Gessel.}
}

@article{GZ,
    author = {Curtis Greene and Thomas Zaslavsky},
    title = {On the interpretation of Whitney numbers through arrangements of hyperplanes, zonotopes, non-Radon partitions, and orientations of graphs},
    journal = {Trans. Amer. Math. Soc.},
    year = {1983},
    volume = {280},
    number = {1},
    pages = {97-126}
}

@article {ZThesis,
    AUTHOR = {Thomas Zaslavsky},
     TITLE = {Facing up to arrangements: face-count formulas for partitions
              of space by hyperplanes},
   JOURNAL = {Mem. Amer. Math. Soc.},
  FJOURNAL = {Memoirs of the American Mathematical Society},
    VOLUME = {1},
      YEAR = {1975},
     PAGES = {vii+102},
      ISSN = {0065-9266,1947-6221},
   MRCLASS = {05A15},
  MRNUMBER = {357135},
MRREVIEWER = {E.\ Jucovi\v c},
       DOI = {10.1090/memo/0154},
}

@article{ZasSignedGraph,
    author = {Thomas Zaslavsky},
    title = {Signed graphs and geometry},
    journal = {J. Combin. Inf. Syst. Sci.},
    volume = {37},
    pages = {95-143},
    year =  {2012}
}

@article{LP,
title = {Euclidean matchings and minimality of hyperplane arrangements},
journal = {Discrete Mathematics},
volume = {344},
number = {3},
pages = {112232},
year = {2021},
issn = {0012-365X},
doi = {https://doi.org/10.1016/j.disc.2020.112232},
author = {Davide Lofano and Giovanni Paolini},
keywords = {Hyperplane arrangements, Discrete Morse theory},
abstract = {We construct a new class of maximal acyclic matchings on the Salvetti complex of a locally finite hyperplane arrangement. Using discrete Morse theory, we then obtain an explicit proof of the minimality of the complement. Our construction provides interesting insights also in the well-studied case of finite arrangements, and gives a nice geometric description of the Betti numbers of the complement. In particular, we solve a conjecture of Drton and Klivans on the characteristic polynomial of finite reflection arrangements. The minimal complex is compatible with restrictions, and this allows us to prove the isomorphism of Brieskorn’s Lemma by a simple bijection of the critical cells. Finally, in the case of line arrangements, we describe the algebraic Morse complex which computes the homology with coefficients in an abelian local system.}
}

@article{Kabluchko,
    author = {Zakhar Kabluchko},
    title = {An Identity for the Coefficients of Characteristic Polynomials of Hyperplane Arrangements},
    journal = {Discrete and Computational Geometry},
    year = {2023},
    volume = {70},
    issue = {4},
    pages = {1476=1498}
}

@article{BG2,
    author = {Richard P. Stanley},
    title = {An introduction to hyperplane arrangements},
    journal = {Geometric combinatorics},
    year = {2007},
    volume = {13},
    pages = {389-496}
}

@article{NUI,
title = {Chromatic posets},
journal = {Journal of Combinatorial Theory, Series A},
volume = {184},
pages = {105496},
year = {2021},
issn = {0097-3165},
doi = {https://doi.org/10.1016/j.jcta.2021.105496},
author = {Samantha Dahlberg and Adrian She and Stephanie {van Willigenburg}},
keywords = {Chromatic symmetric function, Elementary symmetric function, Schur function, Positivity},
abstract = {In 1995 Stanley introduced the chromatic symmetric function XG of a graph G, whose e-positivity and Schur-positivity has been of large interest. In this paper we study the relative e-positivity and Schur-positivity between connected graphs on n vertices. We define and investigate two families of posets on distinct chromatic symmetric functions. The relations depend on the e-positivity or Schur-positivity of a weighed subtraction between XG and XH. We find a biconditional criterion between e-positivity or Schur-positivity and the relation to the complete graph. This gives a new paradigm for e-positivity and for Schur-positivity. We show many other interesting properties of these posets including that the family of trees forms an independent set and are maximal elements. Additionally, we find that stars are independent elements, the independence number increases as we increase in the poset and that the family of lollipop graphs forms a chain.}
}

@article{Reiner,
title = {Non-crossing partitions for classical reflection groups},
journal = {Discrete Mathematics},
volume = {177},
number = {1},
pages = {195-222},
year = {1997},
issn = {0012-365X},
doi = {https://doi.org/10.1016/S0012-365X(96)00365-2},
author = {Victor Reiner},
abstract = {We introduce analogues of the lattice of non-crossing set partitions for the classical reflection groups of types B and D. The type B analogues (first considered by Montenegro in a different guise) turn out to be as well-behaved as the original non-crossing set partitions, and the type D analogues almost as well-behaved. In both cases, they are EL-labellable ranked lattices with symmetric chain decompositions (self-dual for type B), whose rank-generating functions, zeta polynomials, rank-selected chain numbers have simple closed forms.}
}

@article{ZasSignedGraphOr,
title = {Orientation of Signed Graphs},
journal = {European Journal of Combinatorics},
volume = {12},
number = {4},
pages = {361-375},
year = {1991},
issn = {0195-6698},
doi = {https://doi.org/10.1016/S0195-6698(13)80118-7},
author = {Thomas Zaslavsky},
abstract = {A graph with signed arcs is oriented by directing each end of each arc in accordance with a sign-compatibility rule. We prove that the regions of the hyperplane representation of a signed graph ∑, as well as the vertices of the convex hull of all degree vectors of orientations of ∑, are in natural one-to-one correspondence with the cyclic orientations of ∑ The proof uses the oriented matroid of a signed graph. For use elsewhere, we also develop the relationships between orientations and hyperplane representations of a signed graph and those of its double covering graph.}
}

@article{DM,
   title={Sketches, Moves and Partitions: Counting Regions of Catalan Deformations of Reflection Arrangements},
   volume={32},
   DOI={10.37236/13001},
   number={4},
   journal={The Electronic Journal of Combinatorics},
   publisher={The Electronic Journal of Combinatorics},
   author={Deshpande, Priyavrat and Menon, Krishna},
   year={2025}
}

\end{document}